\providecommand{\customgenericname}{}
\newcommand{\newcustomtheorem}[2]{\newenvironment{#1}[1]
  {\renewcommand\customgenericname{#2}
   \renewcommand\theinnercustomgeneric{##1}\innercustomgeneric}{\endinnercustomgeneric}}
\newcommand{\newcustomlemma}[2]{\newenvironment{#1}[1]
  {\renewcommand\customgenericname{#2}
   \renewcommand\theinnercustomgeneric{##1} \innercustomgeneric}{\endinnercustomgeneric}}
\theoremstyle{plain}
\newtheorem{theorem}{Theorem}[section]
\newtheorem{lemma}[theorem]{Lemma}
\newtheorem{corollary}[theorem]{Corollary}
\theoremstyle{definition}
\theoremstyle{remark}
\newtheorem{remark}{Remark}
\numberwithin{equation}{section}
\newcommand{\bZ}{\mathbb{Z}}
\newcommand{\bR}{\mathbb{R}}
\newcommand{\bA}{\mathbb{A}}
\newcommand{\bB}{\mathbb{B}}
\newcommand{\re}{\mathrm{e}}
\newcommand{\rA}{\mathrm{A}}
\newcommand{\rF}{\mathrm{F}}
\newcommand{\rR}{\mathrm{R}}
\newcommand{\rM}{\mathrm{M}}
\newcommand{\cF}{\mathcal{F}}
\newcommand{\supp}{\mathrm{supp}}
\begin{document}

\title[Multilinear maximal averages]{Improved curvature conditions on $L^2\times\cdots\times L^2 \to L^{2/m}$ bounds for multilinear maximal averages}

\author{Chu-hee Cho}
\address{C. Cho, Research Institute of Mathematics, Seoul National University, 08826 Gwanak-ro 1, Seoul, Republic of Korea}
\email{akilus@snu.ac.kr}

\author{Jin Bong Lee}
\address{J. B. Lee, Research Institute of Mathematics, Seoul National University, 08826 Gwanak-ro 1, Seoul, Republic of Korea}
\email{jinblee@snu.ac.kr}

\author{Kalachand Shuin}
\address{K. Shuin, Department of Mathematical Sciences, Seoul National University, 08826 Gwanak-ro 1, Seoul, Republic of Korea}
\email{kcshuin21@snu.ac.kr}

\subjclass[2020]{42B25, 47H60}

\keywords{Multilinear maximal averages, Curvature conditions}

\maketitle
\begin{abstract}
	In this article, we focus on $L^{2}(\mathbb{R}^d)\times\cdots\times L^{2}(\mathbb{R}^d)\rightarrow L^{2/m}(\mathbb{R}^d)$ estimates for  multilinear maximal averages over non-degenerate hypersurfaces. 
	Our findings is new for $m$-linear averages with $m\geq3$, and represent a reproof of the recent result of T. Borges, B. Foster, and Y. Ou on the curvature conditions of the hypersurfaces required in establishing  $L^{2}(\mathbb{R}^d)\times L^{2}(\mathbb{R}^d)\rightarrow L^{1}(\mathbb{R}^d)$ estimates of bilinear maximal functions.
\end{abstract}

\section{Introduction}

Let $\sigma$ be the normalized surface measure on $\Sigma$ supported in a unit ball $\bB^{md}(0,1) \subset \bR^{md}$.
For $\mathrm{F} = (f_1, \dots, f_m)$ with Schwartz functions $f_1, \dots, f_m \in \mathscr{S}(\bR^d)$, define a multilinear averaging operator $\mathrm{A}_\sigma$ by 
\begin{align}\label{defn_ma}
	\mathrm{A}_\sigma(\mathrm{F})(x,t) \coloneqq \int_{\Sigma} \prod_{i=1}^m f_i(x-ty_i)~\mathrm{d}\sigma(y),\end{align}
and a maximal average by
\begin{align}\label{defn_mma}
	\mathrm{M}_\sigma(\rF)(x) \coloneqq \sup_{t>0}|\mathrm{A}_\sigma(\rF)(x,t)|.
\end{align}

One classical example of multilinear maximal function is the bilinear spherical maximal function, which is defined by 
$$
	\mathcal{M}_{full}(\mathrm{F})(x):=\sup_{t>0}\Big|\int_{\mathbb{S}^{2d-1}}f_1(x-ty)f_2(x-tz)~d\sigma(y,z)\Big|,
$$
where $\sigma$ is the normalized surface measure on $\mathbb{S}^{2d-1}$, $d\geq1$.
This operator was initially introduced by Barrionuevo, Grafakos, He, Honz\'ik, and Oliveira \cite{BGHHO_2018}, and then Heo, Hong, and Yang \cite{HHY_2020} improved upon the previous results. 
After, Jeong and Lee \cite{JeongLee_2020} have successfully demonstrated the complete (except few border line cases) $L^{p_1}\times L^{p_2}\rightarrow L^{p}$ boundedness of this operator using a clever idea of slicing argument in dimensions $d\geq2$.

Boundedness of the bilinear spherical maximal function in dimension $d=1$ was later investigated by Christ, Zhou \cite{ChZh_2022} and Dosidis, Ramos \cite{DoRa_2022} independently. 
Recently, Bhojak, Choudhary, Shrivastava, and the third author of this article \cite{BCSS_2023} have established end point estimates of bilinear spherical maximal function in dimensions $d=1,2$. 
Lee and the third author of this article \cite{LeeShuin_2023} extended the slicing technique to deal with boundedness of bilinear maximal functions defined on degenerate hypersurfaces, like $\Sigma_{a_1,a_2}:=\{(y,z)\in \mathbb{R}^{2d}:\Phi(y,z)=|y|^{a_1}+|z|^{a_2}-1=0\}$, for $a_1,a_2\in [1,\infty)$. 
Despite of the vanishing curvature conditions of $\Sigma_{a_1,a_2}$, the authors of \cite{LeeShuin_2023} were able to apply the slicing argument to study boundedness of the bilinear maximal function defined on $\Sigma_{a_1,a_2}$ and proved sharp  $L^{p_1}\times L^{p_2}\rightarrow L^{p}$ estimates except for border line cases. 
In the slicing argument of \cite{LeeShuin_2023}, the non-vanishing gradient $(|\nabla \Phi|\neq 0)$ of the hypersurface representing function $\Phi$ plays a crucial role.

Therefore, a natural question arises, when we only have information about  the curvature conditions of hypersurfaces but not the equation, what can we infer about the boundedness of bilinear and multilinear maximal functions associated with the hypersurfaces? 
This problem was first addressed by Grafakos, He, and Honz\'ik \cite{GHH_2021} and later Chen, Grafakos, He, Honz\'ik, Slav\'{i}kov\'{a} \cite{CGHHS_2022} considered as bilinear analogues of \cite{RdF_1986}.
For $m=2$, Chen $et.$ $al.$ \cite{CGHHS_2022} proved the following result.
\begin{theorem}[\cite{CGHHS_2022}, Theorem 2]
	Let $\sigma$ be the surface measure of a compact and smooth surface  $\Sigma$ without boundary such that $k$ of its $2d-1$ principal curvatures are non-zero. 
	Then $\mathrm{M}_\sigma$ maps $L^{2}(\mathbb{R}^{d})\times L^{2}(\mathbb{R}^{d})\rightarrow L^{1}(\mathbb{R}^{d})$ when  $k>d+2$.
\end{theorem}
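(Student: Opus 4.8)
The plan is to run a Littlewood--Paley decomposition of the hypersurface measure, reduce $\rM_\sigma$ to a geometrically decaying sum of single-frequency maximal operators, and bound each of these by a square-function argument transported to the ambient space $\bR^{md}$. Concretely, fix a dyadic partition $1=\sum_{j\ge0}\psi_j$ on $\bR^{md}$ with $\psi_j$ supported in $\{|\eta|\sim2^{j}\}$ for $j\ge1$, put $\wh{\sigma_j}=\psi_j\,\wh{\sigma}$, so that $\sigma=\sum_j\sigma_j$ and $\rM_\sigma(\rF)\le\sum_{j\ge0}\rM_{\sigma_j}(\rF)$. The term $j=0$ is elementary: the kernel of $\sigma_0$ is a fixed Schwartz function on $\bR^{md}$, hence $|\rA_{\sigma_0}(\rF)(x,t)|$ is dominated, uniformly in $t>0$, by the product of the Hardy--Littlewood maximal functions of $f_1,\dots,f_m$, and H\"older together with the $L^2$-boundedness of the Hardy--Littlewood maximal operator gives $\|\rM_{\sigma_0}(\rF)\|_{L^{2/m}}\lesssim\prod_i\|f_i\|_{L^2}$. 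It therefore suffices to prove $\|\rM_{\sigma_j}(\rF)\|_{L^{2/m}}\lesssim 2^{-\e j}\prod_{i}\|f_i\|_{L^2}$ for $j\ge1$ and some $\e=\e(d,m,k)>0$, and then sum, using when $m>2$ the $\tfrac2m$-subadditivity of $\|\cdot\|_{L^{2/m}}^{2/m}$.

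For the single-frequency bound, after a routine spatial partition one may take $f_1,\dots,f_m$ supported in a fixed ball, and after a Littlewood--Paley decomposition of the inputs one may assume $\wh{f_i}$ supported in $\{|\xi_i|\sim2^{a_i}\}$; on the support of the multilinear symbol $\wh{\sigma_j}(t\xi_1,\dots,t\xi_m)$ of $\rA_{\sigma_j}(\rF)(\cdot,t)$ one then has $t\approx2^{\,j-a_*}$, $a_*=\max_i a_i$, a single dyadic value, so the supremum over $t$ runs over a single dyadic block. The one-dimensional Sobolev embedding
\[
  \sup_{t>0}|g(t)|^{2}\lesssim\Bigl(\int_0^\infty|g(t)|^{2}\,\tfrac{\dd t}{t}\Bigr)^{1/2}\Bigl(\int_0^\infty|t\,g'(t)|^{2}\,\tfrac{\dd t}{t}\Bigr)^{1/2},
\]
applied to $g=\rA_{\sigma_j}(\rF)(x,\cdot)$, then reduces matters to bounding in $L^{2/m}$ the square functions $\cG_{j,\nu}(\rF)(x)=\bigl(\int_0^\infty|(t\di_t)^{\nu}\rA_{\sigma_j}(\rF)(x,t)|^{2}\,\tfrac{\dd t}{t}\bigr)^{1/2}$, $\nu\in\{0,1\}$. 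One should note that the multiplier of $t\di_t\rA_{\sigma_j}(\rF)(\cdot,t)$ is $\eta\cdot\n\wh{\sigma_j}$ evaluated at $t\xi$, of size $\lesssim2^{j}\|\wh{\sigma_j}\|_{L^\infty}$ on $\{|\eta|\sim2^{j}\}$, so the $\nu=1$ square function costs one power $2^{j}$, which the Sobolev inequality then halves --- this extra $2^{j/2}$ is exactly what separates the exponent one reads off from the curvature decay alone from the one actually attained.

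The core is the square-function estimate. For fixed $t$, $\rA_{\sigma_j}(\rF)(\cdot,t)$ is the restriction to the diagonal $\{x_1=\dots=x_m\}\cong\bR^d$ of the function $G_t$ on $\bR^{md}$ with $\wh{G_t}(\xi_1,\dots,\xi_m)=\wh{\sigma_j}(t\xi_1,\dots,t\xi_m)\prod_i\wh{f_i}(\xi_i)$, whose Fourier support lies in $\prod_i\{|\xi_i|\sim2^{a_i}\}$. Plancherel on $\bR^{md}$ together with the band-limited restriction inequality $\|G|_{\mathrm{diag}}\|_{L^2(\bR^d)}\lesssim|\Pi|^{1/2}\|G\|_{L^2(\bR^{md})}$, where $\Pi$ is the portion of $\supp\wh{G}$ transverse to the diagonal (for us $|\Pi|\lesssim\prod_{i\ne i_0}2^{a_i d}$ for any fixed $i_0$ with $a_{i_0}=a_*$, which also supplies the off-diagonal decay needed to sum over the input frequencies), bounds $\cG_{j,\nu}$ in $L^2(\bR^d)$ in terms of $\|(\eta\cdot\n)^{\nu}\wh{\sigma_j}\|_{L^\infty}$; one then passes from $L^2(\bR^d)$ to $L^{2/m}(\bR^d)$ by H\"older on the ball of radius $\sim\max(1,2^{\,j-a_*})$ supporting the output. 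The curvature hypothesis enters only through $\|\wh{\sigma_j}\|_{L^\infty}$: since at least $k$ of the principal curvatures of $\Sigma$ are nonzero, $|\wh{\sigma}(\eta)|\lesssim|\eta|^{-k/2}$ away from the cone $\G\subset\bR^{md}$ spanned by the ``flat'' conormal directions, while near $\G$ only $|\wh{\sigma}(\eta)|\lesssim1$. Splitting $\wh{\sigma_j}$ into a piece at angular distance $\gtrsim2^{-\dlt j}$ from $\G$, estimated with the decay $2^{-jk/2}$, and a remainder supported within $2^{-\dlt j}$ of $\G$, whose support has measure $\lesssim2^{jmd}\,2^{-\dlt j(k-1)}$ (as $\dim\G\le md-k$), estimating the two separately and optimizing $\dlt$, produces the geometric gain in $j$; summing over the input frequency pieces and accounting for the $t\di_t$-loss, the net exponent is negative precisely when $k>d+2$ in the bilinear case and at its $m$-linear analogue in general.

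The step I expect to be the main obstacle is this last one: extracting the sharp power of $2^{j}$ in $L^{2/m}$ rather than in $L^2$, which requires carefully tracking how the passage from $\bR^{md}$ back down to the diagonal $\bR^{d}$ interacts simultaneously with the decay of $\wh{\sigma}$, with the relative sizes $2^{a_1},\dots,2^{a_m}$ of the input frequencies (off-diagonal configurations being better, and having to be used to absorb the frequency sum), and with the $t\di_t$-loss; together with the geometric bookkeeping of the degenerate cone $\G$ so that its contribution stays below the curvature gain $2^{-jk/2}$ exactly at the claimed value of $k$ (the extra ``$+2$'' over $d$ coming from the Sobolev step). A second, genuine point is that $\rM_\sigma$ does not commute with any of the decompositions, so the linearizing reduction to square functions --- which do commute, via the Sobolev inequality in $t$ --- must precede all of the summations, and the initial spatial localization of the inputs must be set up so that the scale-localized estimates reassemble with only acceptable losses.
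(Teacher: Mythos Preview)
Your overall architecture---Littlewood--Paley decompose $\sigma$, control each piece by a Sobolev-in-$t$ reduction to square functions, and estimate those via the diagonal-restriction inequality on $\bR^{md}$---is the strategy of \cite{CGHHS_2022} itself, and it does yield the stated theorem. However, the paper under review does \emph{not} re-run that argument. It proves the strictly stronger Theorem~\ref{thm_main} (for $m=2$ the threshold becomes $s>\tfrac{d+1}{2}$, i.e.\ $k>d+1$ rather than $k>d+2$) along a different route: first a \emph{local} maximal estimate (Theorem~\ref{thm_loc}) using the $d$-dimensional--measure framework of Ko--Lee--Oh (Lemma~\ref{lem_a_meas}) in place of the Sobolev inequality in $t$, and then a passage from local to global (Section~4) by decomposing each dyadic dilation scale into low/high input frequencies and running an induction on the degree $m$ of multilinearity. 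The gain over your outline comes from this local-to-global step: the paper never needs to partition space and then reassemble, nor to sum single-scale maximal bounds over all input-frequency configurations by hand, and this is precisely where the extra power of $2^{j}$ separating $k>d+2$ from $k>d+1$ is saved.

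Two points in your sketch are genuinely off. First, the ``degenerate cone $\Gamma$'' is a phantom: for a smooth compact hypersurface without boundary with $k$ nonvanishing principal curvatures at every point, stationary phase gives $|\widehat{\sigma}(\eta)|\lesssim(1+|\eta|)^{-k/2}$ \emph{uniformly} in $\eta$ (directions not normal to $\Sigma$ produce no critical point and decay rapidly; normal directions give a Hessian of rank $\ge k$). Your angular splitting near $\Gamma$ is therefore unnecessary, and as stated would actually \emph{weaken} the exponent you extract. Second, the sentence ``after a routine spatial partition one may take $f_1,\dots,f_m$ supported in a fixed ball'' hides the real difficulty for the \emph{global} supremum $\sup_{t>0}$: functions supported in distant unit balls interact at large $t$, and the reassembly is not free. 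This is exactly the step the paper replaces by its $\mathfrak{A}_k^{\alpha}$/$\mathfrak{M}_{\mathbf n}$ decomposition and the induction of Lemma~\ref{lem_induction}. If you want to stay with your direct approach, you should drop the cone splitting, use the uniform decay of $\widehat{\sigma}$, and handle the global $t$-supremum without spatially localizing the inputs---e.g.\ by applying the Sobolev inequality in $t$ over $(0,\infty)$ with measure $dt/t$ before any frequency decomposition of the $f_i$, as in \cite{RdF_1986,CGHHS_2022}; the summation over input frequencies then goes through an $\ell^2$ orthogonality argument rather than the ad hoc ``off-diagonal is better'' bookkeeping you describe.
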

Very recently, Borges, Foster, and Ou \cite{BoFoOu_2023} have improved the curvature condition to $k\geq d+2$.
Their results are based on studies of Sobolev smoothing estimates for various bilinear maximal operators given by Fourier multipliers, which contain multi-scale maximal functions, and maximal functions with fractal dilation sets. 

To state our main results, we begin with mutlilinear local maximal functions.
\begin{align}
	\rM_\sigma^{loc}(\mathrm{F})(x) \coloneqq \sup_{1<t<2} \Big| \int \prod_{i=1}^m f_i(x- t y_i)~\mathrm{d}\sigma(y)\Big|.
\end{align}
We first establish multilinear local maximal estimates.
\begin{theorem}\label{thm_loc}
	Let $s>\frac{(m-1)d}{2} + \frac{1}{2}$, $m\geq2$, and a measure $\sigma$ be supported in $\bB^{md}(0,1)$ and satisfy
	\[
		|\widehat{\mathrm{d}\sigma}(\xi)|\lesssim (1+|\xi|)^{-s}.
	\]
	Then we have for $2/m\leq p\leq2$,
	\[
		\| \rM_\sigma^{loc}(\rF) \|_{L^{p}(\bB^d(0,1))} \leq C \prod_{i=1}^m \|f_i\|_{L^2(\bR^d)}.
	\]
	For $p=2/m$, we have
	\[
		\| \rM_\sigma^{loc}(\rF) \|_{L^{2/m}(\bR^d)} \leq C \prod_{i=1}^m \|f_i\|_{L^2(\bR^d)}.
	\]
\end{theorem}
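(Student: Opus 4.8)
The plan is to reduce the maximal estimate to a fixed-$t$ (single-scale) estimate plus a Sobolev-embedding/square-function argument, exploiting the decay hypothesis $|\widehat{\dd\sigma}(\xi)|\lesssim(1+|\xi|)^{-s}$ with $s>\frac{(m-1)d}{2}+\frac12$. First I would establish the pointwise bound for the single average: using the Fourier inversion formula,
\begin{align*}
	\rA_\sigma(\rF)(x,t)=\int_{(\bR^d)^m}\widehat{\dd\sigma}(t\xi_1,\dots,t\xi_m)\prod_{i=1}^m\widehat{f_i}(\xi_i)\,e^{2\pi i x\cdot(\xi_1+\cdots+\xi_m)}\dd\xi_1\cdots\dd\xi_m,
\end{align*}
so that by Plancherel in $x$ and Cauchy--Schwarz (distributing the weight $(1+|t\xi|)^{-s}$), for each fixed $t\in[1,2]$ one gets $\|\rA_\sigma(\rF)(\cdot,t)\|_{L^{2/m}}\lesssim\prod_i\|f_i\|_{L^2}$ provided $s>\frac{(m-1)d}{2}$; the extra $\frac12$ is the room needed to upgrade to the supremum in $t$. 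The cleanest way to spend that $\frac12$ is the standard Sobolev embedding in the $t$-variable: write
\begin{align*}
	\sup_{1<t<2}|\rA_\sigma(\rF)(x,t)|^2\lesssim\int_1^2|\rA_\sigma(\rF)(x,t)|^2\dd t+\Big(\int_1^2|\rA_\sigma(\rF)(x,t)|^2\dd t\Big)^{1/2}\Big(\int_1^2|\partial_t\rA_\sigma(\rF)(x,t)|^2\dd t\Big)^{1/2},
\end{align*}
and note $\partial_t$ brings down a factor $\xi_i$, i.e. costs one derivative, which is covered by the surplus decay.

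The key steps, in order: (1) prove the fixed-$t$ $L^2\times\cdots\times L^2\to L^{2/m}$ bound by the Plancherel computation above, tracking that the constant is uniform for $t\in[1,2]$; (2) prove the companion bound for $\partial_t\rA_\sigma(\rF)(\cdot,t)$, where the loss of $|\xi|\sim 1$ is absorbed by decreasing $s$ by $1$ — this is where the hypothesis $s>\frac{(m-1)d}{2}+\frac12$ (in fact $s-1>\frac{(m-1)d}{2}-\frac12$ suffices, with an interpolation between the two integrals as in the Sobolev embedding) is used; (3) combine (1) and (2) via the one-dimensional Sobolev inequality to control $\rM_\sigma^{loc}(\rF)$ in $L^{2/m}(\bR^d)$; (4) for the range $2/m<p\leq 2$ on the unit ball $\bB^d(0,1)$, interpolate the $p=2/m$ estimate with the trivial $L^\infty$ bound $\|\rM_\sigma^{loc}(\rF)\|_{L^\infty}\lesssim\prod_i\|f_i\|_{L^\infty}$... — but since we only have $L^2$ hypotheses, instead I would use the support localization: for $x\in\bB^d(0,1)$ and $1<t<2$, the average $\rA_\sigma(\rF)(x,t)$ depends only on $f_i$ restricted to a fixed ball, and Hölder on $\bB^d(0,1)$ (a set of finite measure) upgrades $L^{2/m}$ to $L^p$ for all $p\le 2/m\cdot$(something) — more precisely, since $2/m\le p$, we have $\|g\|_{L^p(\bB^d(0,1))}\lesssim\|g\|_{L^{2}(\bB^d(0,1))}$ when $p\le 2$, so it in fact suffices to prove the $L^2(\bB^d(0,1))$ bound, which follows a fortiori from the $L^{2/m}$ argument restricted to the ball (the $L^{2/m}$ quasi-norm over a finite-measure set dominates, up to constants depending on $m$, via reverse Hölder is false — so instead run step (1)–(3) directly with $L^2$ in place of $L^{2/m}$, using that on $\bB^d(0,1)$ one may replace the product estimate's exponent freely between $2/m$ and $2$ by Hölder since all target norms are over the fixed finite-measure set $\bB^d(0,1)$).

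The main obstacle I anticipate is step (1): verifying that the naive weight distribution $(1+|t\xi|)^{-s}=\prod$ (split of the weight) genuinely yields a bounded multilinear form into $L^{2/m}$ rather than merely $L^2$ in some auxiliary variable. The subtlety is that $\xi_1,\dots,\xi_m$ are independent, so $|\widehat{\dd\sigma}(t\xi)|\le(1+|t\xi|)^{-s}$ with $|t\xi|=|(t\xi_1,\dots,t\xi_m)|\gtrsim\max_i|\xi_i|$, and one must show $\prod_i\widehat{f_i}(\xi_i)$ integrated against this weight, after taking $L^{2/m}_x$, is controlled by $\prod_i\|f_i\|_{L^2}$. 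This is handled by Hölder's inequality in the form $\|\prod g_i\|_{L^{2/m}}\le\prod\|g_i\|_{L^2}$ applied \emph{after} one uses Plancherel to identify $\rA_\sigma(\rF)(\cdot,t)$, for each fixed $t$, as an $L^2_x$ function with norm $\lesssim\prod_i\|f_i\|_{L^2}$ under the decay $s>\frac{(m-1)d}{2}$ (the gain $\frac{(m-1)d}{2}$ over $0$ comes exactly from the $m-1$ "excess" $d$-dimensional frequency integrations beyond the single Plancherel pairing) — then since $\rM_\sigma^{loc}(\rF)$ is being measured on $\bB^d(0,1)$ (or, for $p=2/m$, globally, where the support of $\sigma$ and the structure of the average still confine things), Hölder on the finite-measure ball trades $L^2$ for $L^{2/m}$. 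Getting the bookkeeping of these exponents exactly right, and confirming the $p=2/m$ global statement separately (where one cannot use the finite-measure trick and must instead argue that the $L^{2/m}$ bound is genuinely the natural Hölder endpoint of $m$ copies of $L^2$), is the crux of the argument.
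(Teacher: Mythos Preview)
Your Sobolev-embedding approach can be made to work and is a legitimate alternative to the paper's argument, but two gaps must be filled. First, the ``interpolation between the two integrals'' in
\[
\sup_{1<t<2}|A(x,t)|\lesssim\|A(x,\cdot)\|_{L^2_t}+\|A(x,\cdot)\|_{L^2_t}^{1/2}\|\partial_tA(x,\cdot)\|_{L^2_t}^{1/2}
\]
only delivers the threshold $s>\tfrac{(m-1)d}{2}+\tfrac12$ \emph{after} a Littlewood--Paley decomposition in $|\xi|\sim2^j$: at scale $j$ the two factors on the right contribute $2^{-j(s-(m-1)d/2)}$ and $2^{j}\cdot2^{-j(s-(m-1)d/2)}$, and it is the geometric mean of these that sums. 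Without the decomposition the second factor is simply infinite once $s\le\tfrac{(m-1)d}{2}+1$, so your ``interpolation'' is meaningless as stated. You allude to ``single-scale'' and ``square-function'' arguments in the opening sentence but never insert the decomposition into steps (1)--(3). Relatedly, step~(2) implicitly uses $|\nabla\widehat{d\sigma}(\xi)|\lesssim(1+|\xi|)^{-s}$, which is not assumed; it does follow from compact support via the reproducing identity $\widehat{d\sigma}=\widehat{d\sigma}\ast\widehat\phi$ for a bump $\phi\equiv1$ on $\supp\sigma$, and you must say so. Second, the global $L^{2/m}(\bR^d)$ bound does not come from any ``natural H\"older endpoint'' reasoning: Minkowski's integral inequality fails below exponent~$1$, and $A_\sigma(\rF)$ is not a pointwise product. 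The correct route is localization: since $\supp\sigma\subset\bB^{md}(0,1)$ and $t\in[1,2]$, the value $\rM_\sigma^{loc}(\rF)(x)$ for $x\in\bB^d(x_0,1)$ depends only on $f_i|_{\bB^d(x_0,3)}$; summing the local $L^{2/m}$ bounds over $x_0\in\bZ^d$ and applying H\"older in the lattice variable recovers the global estimate. Your step~(4) eventually lands on the right idea (prove $L^2(\bB^d)$ and use H\"older down to $L^p(\bB^d)$), but the path there is muddled.

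For comparison, the paper also performs the Littlewood--Paley decomposition, but handles the $\sup_t$ differently: rather than differentiate in $t$, it modifies $\rA_{\sigma_j}$ to a version $\widetilde{\rA}_{\sigma_j}$ whose space-time Fourier transform is supported in a ball of radius $\sim2^j$ in $\bR^{d+1}$, and observes that $x\mapsto(x,\mathbf{t}(x))$ pushes Lebesgue measure on $\bB^d(0,R)$ forward to a $d$-dimensional measure on $\bR^{d+1}$. A Bernstein-type inequality for such measures (their Lemma~2.1) then gives $\|\sup_t|\widetilde{\rA}_{\sigma_j}|\|_{L^2(\bB^d(0,R))}\lesssim2^{j/2}\|\widetilde{\rA}_{\sigma_j}\|_{L^2_{x,t}}$, the same $2^{j/2}$ loss you obtain from Sobolev embedding but without touching $\nabla\widehat{d\sigma}$. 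For the passage to $L^{2/m}(\bR^d)$ the paper uses a kernel-decay lemma (their Lemma~3.2) dominating the far-away piece by $2^{-jN}\prod_i M_{HL}f_i$; this is a heavier substitute for the elementary lattice localization above.
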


By making use of Theorem~\ref{thm_loc}, we obtain estimates for global maximal functions \eqref{defn_mma}.
\begin{theorem}\label{thm_main}
	Let $m$, $d$, $\sigma$ be given as in Theorem~\ref{thm_loc}.
	Then we have
	$$
		\| \rM_\sigma(\rF) \|_{L^{2/m}(\bR^d)} \leq C \prod_{i=1}^m \|f_i\|_{L^2(\bR^d)}.
	$$
\end{theorem}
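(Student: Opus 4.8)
The plan is to deduce the global bound from the local one (Theorem~\ref{thm_loc}) via a standard Littlewood--Paley-type decomposition of the dilation parameter combined with the trivial $L^\infty \times \cdots \times L^\infty$ bound and interpolation-flavored summation. First I would partition the supremum over $t>0$ dyadically: write $\rM_\sigma(\rF) \leq \sup_{j \in \bZ} \rM_\sigma^{(j)}(\rF)$ where $\rM_\sigma^{(j)}(\rF)(x) = \sup_{2^j \leq t < 2^{j+1}} |\rA_\sigma(\rF)(x,t)|$. By the natural scaling of the averaging operator \eqref{defn_ma} — substituting $x \mapsto 2^j x$, $t \mapsto 2^j t$, $f_i \mapsto f_i(2^j \cdot)$ — each $\rM_\sigma^{(j)}$ is a rescaled copy of $\rM_\sigma^{loc}$. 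Since $L^2$ is scaling-critical in a way that matches $L^{2/m}$ on the output (the homogeneity $d/2 + \cdots + d/2 = md/2$ on the right matches $d/(2/m) \cdot$ nothing extra), one checks that the $L^{2/m}(\bR^d) \leftarrow \prod L^2(\bR^d)$ estimate for $\rM_\sigma^{loc}$ given by Theorem~\ref{thm_loc} is scale-invariant, so each $\rM_\sigma^{(j)}$ satisfies the same bound with the \emph{same} constant $C$, uniformly in $j$.

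The main obstacle is then summing in $j$: from a uniform-in-$j$ bound on $\|\rM_\sigma^{(j)}(\rF)\|_{L^{2/m}}$ one cannot directly sum, since there are infinitely many $j$. To handle this I would exploit quasi-triangle inequality for $L^{2/m}$ (which, for $2/m < 1$, reads $\|\sum g_j\|_{2/m}^{2/m} \leq \sum \|g_j\|_{2/m}^{2/m}$) together with \emph{frequency localization}: decompose each $f_i = \sum_{k} P_k f_i$ into Littlewood--Paley pieces, so that $\rA_\sigma(\rF)(\cdot,t)$ for $t \sim 2^j$ is effectively supported, in frequency, near scale $2^{-j}$ in each variable. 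The decay hypothesis $|\widehat{\dd\sigma}(\xi)| \lesssim (1+|\xi|)^{-s}$ with $s > (m-1)d/2 + 1/2$ then gives, for the pieces where the frequency scale $2^k$ of $f_i$ is much larger than $2^{-j}$, an extra gain of $2^{-(\text{something})|j+k|}$ coming from the oscillatory decay of $\widehat{\dd\sigma}$ evaluated at frequency $\sim 2^{j+k}$; for the low-frequency regime $2^k \lesssim 2^{-j}$ one instead uses the trivial estimate $|\rA_\sigma(\rF)(x,t)| \leq \prod_i \|P_{\leq -j} f_i\|_\infty \lesssim \prod_i 2^{(-j)d/2}\|P_k f_i\|_2$ via Bernstein, again producing geometric decay in $j$. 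Summing the resulting geometric series in $j$ (and in the LP indices $k$) converges precisely because $s$ exceeds the critical exponent $(m-1)d/2 + 1/2$, with room to spare.

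I would organize the argument as: (i) record the scaling identity relating $\rM_\sigma^{(j)}$ to $\rM_\sigma^{loc}$; (ii) invoke Theorem~\ref{thm_loc} at $p = 2/m$ to get $\|\rM_\sigma^{(j)}(\rF)\|_{L^{2/m}(\bR^d)} \leq C\prod_i \|f_i\|_{L^2}$ uniformly in $j$; (iii) for $j \geq 0$ (large $t$, the ``low-frequency'' or global regime) use Bernstein plus the decay of $\widehat{\dd\sigma}$ to get $\|\rM_\sigma^{(j)}(\rF)\|_{L^{2/m}(\bR^d)} \lesssim 2^{-\e j}\prod_i \|f_i\|_{L^2}$ for some $\e>0$; (iv) for $j < 0$ the local estimate already lives on a fixed ball after rescaling and the tails are controlled by the rapid decay of $\widehat{\dd\sigma}$, again giving $2^{-\e|j|}$ decay; (v) combine via $\ell^{2/m}$-summation (quasi-triangle inequality) over $j \in \bZ$. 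The step I expect to be genuinely delicate is (iii)--(iv): making the frequency-localized estimates interact correctly with the $t$-supremum and verifying that the exponent budget $s > (m-1)d/2 + 1/2$ is exactly what makes every geometric series converge, so that no loss is incurred in passing from the local theorem to the global one.
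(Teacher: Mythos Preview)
There is a genuine gap in steps (iii)--(iv) of your organized plan. By the very scale invariance you record in (ii), the bound $\|\rM_\sigma^{(j)}(\rF)\|_{L^{2/m}}\le C\prod_i\|f_i\|_{L^2}$ holds with the \emph{same} constant for every $j\in\bZ$; there is no decay in $j$ for the unlocalized operator, and no combination of Bernstein and the decay of $\widehat{\dd\sigma}$ can manufacture one. So (iii)--(iv) as written are simply false, and the $\ell^{2/m}$-sum in (v) diverges.

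What the paper does --- and what your earlier paragraph gestures at but your itemized plan abandons --- is to frequency-localize the inputs \emph{relative to the dilation scale} and then treat two qualitatively different contributions. First, when every $f_i$ is at high frequency relative to $t\sim 2^{-k}$ (say $\widehat{f_i}$ lives on $|\xi_i|\sim 2^{k+n_i}$ with $n_i\ge 0$), the local lemma gives decay $2^{-c\max_i n_i}$, and the sum over $k$ is then closed by \emph{Littlewood--Paley orthogonality}: H\"older followed by $\sum_{k}\|P_{k+n_i}f_i\|_{L^2}^2\lesssim\|f_i\|_{L^2}^2$. You never invoke this orthogonality, and it is not optional --- without it the $k$-sum is infinite. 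Second, when some subset of the $f_\mu$ is at low frequency ($|\xi_\mu|\lesssim 2^{k}$), those factors satisfy $\sup_{1<t<2}\sup_{y\in\Sigma}|P_{<k}f_\mu(x-2^{-k}ty_\mu)|\lesssim M_{HL}f_\mu(x)$ \emph{uniformly in $k$}; pulling them outside the $\sigma$-integral leaves an $(m-\alpha)$-linear maximal average in the remaining factors. Bounding that in $L^{2/(m-\alpha)}$ is exactly the global theorem for a smaller $m$, so the argument is an \emph{induction on $m$}. Your sketch has no induction and no mechanism for these mixed terms; the Bernstein gain you propose is precisely cancelled by scaling and in any case yields no finite $L^{2/m}(\bR^d)$ quantity on its own. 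The two missing ideas, then, are orthogonality for the all-high pieces and induction on the degree for the mixed pieces.
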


\begin{remark}\label{rem_counterex}
         In order to discuss optimal range of the above estimates, we consider two operators. First, let $\mathrm{M}_{\mathbb{S}^2}$ denote the spherical maximal function on $\bR^3$ whose Fourier symbol has decay $1$.
	Then, $m$-product of $\mathrm{M}_{\mathbb{S}^2}$ obeys that
	$$
		\Big\| \prod_{i=1}^m\mathrm{M}_{\mathbb{S}^2}(f_i)\Big\|_{L^p(\bR^3)} \leq C \prod_{i=1}^m \|f_i\|_{L^{p_i}(\bR^3)},
	$$
	whenever $p>3/(2m)$, $p_i>3/2$ with $1/p = \sum_{i=1}^m 1/p_i$.
	This certainly implies $L^2(\bR^3)\times\cdots\times L^2(\bR^3) \to L^{2/m}(\bR^3)$ estimates.
	On the other hand, for the circular maximal function $\mathrm{M}_{\mathbb{S}^1}$, its Fourier decay of the circular measure is $1/2$ and its $m$-product does not satisfy                   $L^2(\bR^2)\times\cdots\times L^2(\bR^2) \to L^{2/m}(\bR^2)$ estimates
	since $\mathrm{M}_{\mathbb{S}^1}$ is bounded on $L^p(\bR^2)$ if and only if $p>2$. In particular, in bilinear case of Theorems~\ref{thm_loc} and \ref{thm_main}, the condition $s>\frac{(m-1)d}{2} + \frac{1}{2}$ becomes $s>\frac{d+1}{2}$.
	However, it should be noted that $s>\frac{d+1}{2}$ is not sharp for Theorems~\ref{thm_loc} and \ref{thm_main}, and in terms of $k$ nonvanishing principal curvatures, $s>\frac{d+1}{2}$ is equivalent to $k>d+1$ same as \cite{BoFoOu_2023}.
	In fact, from the results for two maximal operators, one may conjecture for the case of $m=2$ that optimal range of $s$ for Theorems~\ref{thm_loc} and \ref{thm_main} is $s>1/2$.
	\end{remark}
By multilinear real interpolation, we obtain $L^p(\bR^d)\times\cdots\times L^p(\bR^d)\to L^{p/m}(\bR^d)$ estimates of $\mathrm{M}^{loc}_{\sigma}$ for some $p<2$.
	\begin{corollary}\label{Prop1}
		Let $m\geq2$ and $\Sigma$ be a compact and smooth hypersurface with non-vanishing principal curvatures $k>(m-1)d+1$. Then, the multilinear local maximal function $\mathrm{M}^{loc}_{\sigma}$ maps $L^{p}(\mathbb{R}^{d})\times\cdots\times L^{p}(\mathbb{R}^{d})$ to $L^{p/m}(\mathbb{R}^{d})$ for $p>\frac{k-(m-1)d+1}{k-(m-1)d}$. 
	\end{corollary}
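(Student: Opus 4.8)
The plan is to deduce Corollary~\ref{Prop1} from Theorem~\ref{thm_loc} by multilinear real interpolation together with a trivial $L^\infty$ endpoint estimate. First I would record the two endpoints that feed the interpolation. On one side, Theorem~\ref{thm_loc} (its first display) gives, under the hypothesis $k>(m-1)d+1$, which translates into the Fourier-decay exponent $s>\frac{(m-1)d}{2}+\frac12$ via the stationary phase bound $|\widehat{\dd\sigma}(\xi)|\lesssim (1+|\xi|)^{-k/2}$ for a hypersurface with $k$ nonvanishing principal curvatures, the bound
\[
	\|\rM^{loc}_\sigma(\rF)\|_{L^{2/m}(\bB^d(0,1))}\lesssim \prod_{i=1}^m\|f_i\|_{L^2(\bR^d)}.
\]
Since the dilations are restricted to $1<t<2$ and $\sigma$ is a probability measure supported in $\bB^{md}(0,1)$, on the other side we have the elementary estimate $\|\rM^{loc}_\sigma(\rF)\|_{L^\infty(\bR^d)}\le \prod_{i=1}^m\|f_i\|_{L^\infty(\bR^d)}$. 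These are the vertices of the multilinear interpolation diagram: $(L^2,\dots,L^2;L^{2/m})$ and $(L^\infty,\dots,L^\infty;L^\infty)$.

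Next I would apply multilinear real (Marcinkiewicz-type) interpolation — in the form due to Grafakos--Tao or Janson, valid for $m$-linear operators mapping into $L^q$ with $q<1$ allowed — to the sublinear operator $\rF\mapsto \rM^{loc}_\sigma(\rF)$. Interpolating between $(2,\dots,2,2/m)$ and $(\infty,\dots,\infty,\infty)$ along the diagonal, for each $\theta\in(0,1)$ one obtains $L^{p}\times\cdots\times L^{p}\to L^{p/m}$ with $\frac1p=\frac\theta2$ and target exponent $\frac{1}{p/m}=\frac{m\theta}{2}$, which is consistent since $\frac1p=\sum_i\frac1p = m\cdot\frac{\theta}{2m}$ — wait, more carefully: the diagonal line through those two points in the $(1/p_1,\dots,1/p_m,1/q)$ parameter space is exactly $1/q=\sum 1/p_i$ with $1/p_i$ ranging over $(0,1/2]$, so we recover precisely the scaling $q=p/m$ for every $p\in(2,\infty]$. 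That by itself, however, only gives $p\ge 2$, not the improvement $p>\frac{k-(m-1)d+1}{k-(m-1)d}$ claimed in the corollary. To push below $2$ I would instead exploit that the hypothesis $k>(m-1)d+1$ is an open condition: pick $\epsilon>0$ small so that $k>(m-1)d+1+\epsilon$ still holds, apply Theorem~\ref{thm_loc} with the slightly smaller target exponent, i.e. use that the first display of Theorem~\ref{thm_loc} in fact holds for a range $2/m\le p\le 2$ and can be combined with a short-range $L^{p_0}\times\cdots\times L^{p_0}$ input estimate coming from the single-variable local maximal bound on $L^{p_0}$ for $p_0$ just above $\frac{k-(m-1)d+1}{k-(m-1)d}$ via Hölder in the product, and then interpolate those two estimates.

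Concretely, the key auxiliary estimate is: for a hypersurface with $k$ nonvanishing principal curvatures, the linear local maximal function $M^{loc}_{\sigma_0}$ attached to a $(k-1+(m-1)d)$-dimensional factor slice is bounded on $L^{p_0}(\bR^d)$ for $p_0>\frac{k-(m-1)d+1}{k-(m-1)d}$ (this is the classical Stein--Bourgain--Mockenhaupt--Seeger--Sogge range rewritten; note $\frac{k-(m-1)d+1}{k-(m-1)d}=1+\frac{1}{k-(m-1)d}$). Taking the $m$-fold product and using Hölder gives $\rM^{loc}_\sigma(\rF)\lesssim\prod_i M^{loc}(f_i)$ pointwise after a slicing/Fubini reduction of $\dd\sigma$ (as in the paper's slicing philosophy), hence the $L^{p_0}\times\cdots\times L^{p_0}\to L^{p_0/m}$ bound for $p_0$ in that open range. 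Interpolating this against the $L^2\times\cdots\times L^2\to L^{2/m}$ bound of Theorem~\ref{thm_loc} then yields the stated diagonal estimate for all $p$ between $p_0$ and $2$, and letting $p_0\downarrow\frac{k-(m-1)d+1}{k-(m-1)d}$ gives the full open range $p>\frac{k-(m-1)d+1}{k-(m-1)d}$.

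The main obstacle I anticipate is justifying the pointwise domination $\rM^{loc}_\sigma(\rF)(x)\lesssim\prod_{i=1}^m M^{loc}(f_i)(x)$ — or an $L^{p_0}$ substitute for it — with the correct dimension count: one must disintegrate the surface measure $\sigma$ on the $(md-1)$-dimensional hypersurface $\Sigma$ into a family of measures on lower-dimensional slices along the diagonal direction $y_1=\cdots=y_m$, control the number of nonvanishing curvatures surviving on a generic slice (this is exactly where $k>(m-1)d+1$, equivalently $k-(m-1)d\ge 2$ strictly, is used so that each slice still carries at least $2$ — actually $>1$ in the fractional sense — nonvanishing curvatures and the single-variable maximal operator on $L^{p_0}$ for $p_0$ near $1+\frac1{k-(m-1)d}$ makes sense), and then apply Hölder. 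Once that reduction is in place the rest is a routine invocation of multilinear real interpolation, so I would keep that part brief and spend the bulk of the argument on the slicing estimate and the arithmetic verifying that the resulting exponent range is precisely $p>\frac{k-(m-1)d+1}{k-(m-1)d}$.
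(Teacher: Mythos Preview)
Your proposal has a genuine gap at precisely the step you flag as the main obstacle. The pointwise domination $\rM^{loc}_\sigma(\rF)(x)\lesssim\prod_{i} M^{loc}(f_i)(x)$ via a slicing/disintegration of $\dd\sigma$ is not available under the hypotheses of the corollary: the paper works only with the curvature count $k$ (equivalently, the Fourier decay $|\widehat{\dd\sigma}(\xi)|\lesssim(1+|\xi|)^{-k/2}$) and explicitly \emph{not} with an explicit defining equation for $\Sigma$, which is what a slicing argument needs. There is no guarantee that generic slices in the $y_1=\cdots=y_m$ direction retain $k-(m-1)d$ nonvanishing curvatures, nor that the resulting linear local maximal operator is bounded on $L^{p_0}$ for $p_0>1+\frac{1}{k-(m-1)d}$; that exponent does not match any standard single-scale result and the dimensional bookkeeping in your sketch is off. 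Without this step your second interpolation endpoint collapses, and the first attempt (interpolating Theorem~\ref{thm_loc} against the trivial $L^\infty$ bound) only reaches $p\ge 2$, as you noticed.

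The paper's actual argument avoids all of this by working \emph{piecewise in frequency}. One Littlewood--Paley decomposes $\dd\sigma=\sum_{j\ge0}\sigma_j$ and proves two bounds for each piece $\rM^{loc}_{\sigma_j}$: from Lemma~\ref{lem_loc_max},
\[
\|\rM^{loc}_{\sigma_j}\|_{L^2\times\cdots\times L^2\to L^{2/m}}\lesssim 2^{-j\left(\frac{k}{2}-\frac{(m-1)d}{2}-\frac12\right)},
\]
while the elementary pointwise bound $\rM^{loc}_{\sigma_j}(\rF)(x)\lesssim 2^{j}\prod_{i=1}^m M_{HL}f_i(x)$ (the factor $2^{j}$ coming from $\|\sigma_j\|_1\lesssim 2^{j}$) gives
\[
\|\rM^{loc}_{\sigma_j}\|_{L^{p_1}\times\cdots\times L^{p_m}\to L^{p}}\lesssim 2^{j}
\]
for all $p_i>1$. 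Multilinear real interpolation between these two estimates for each fixed $j$ produces, on the diagonal $p_1=\cdots=p_m=p$, a bound of the form $2^{j\epsilon(p)}$ with $\epsilon(p)<0$ precisely when $p>\frac{k-(m-1)d+1}{k-(m-1)d}$; summing over $j$ then finishes the proof. The missing idea in your plan is this use of the \emph{quantitative decay} $2^{-j\,s(m,d)}$ in Lemma~\ref{lem_loc_max} (rather than the summed Theorem~\ref{thm_loc}) as one interpolation endpoint, balanced against a \emph{lossy} Hardy--Littlewood bound at the other --- no slicing or linear $L^{p_0}$ maximal theorem is needed.
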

	By Corollary~\ref{Prop1}, it is shown for $m=2$ that $M_\sigma^{loc}$ satisfies $L^p(\bR^d)\times L^p(\bR^d) \to L^{p/2}(\bR^d)$ estimates for $p>\frac{k-d+1}{k-d}$.
	However, the range of $p$ is not sharp as we mentioned in Remark~\ref{rem_counterex}.
	It could be conjectured that $p>(k+1)/k$ is sharp, which is obtained from validity of Theorems~\ref{thm_loc} and \ref{thm_main} for $s>1/2$.

\section*{Notations}
Let $\bB^d(x,R)$ denote a $d$-dimensional ball of radius $R$ centered at $x$.
Let $\bA^d(\lambda)$ be a $d$-dimensional annulus given by $\{x\in\bR^d : 2^{-1}\lambda <|x| < 2\lambda\}$.

\section{Preliminaries}

\subsection{$\alpha$-dimensional measures and weighted estimates}

Contents of this subsection are mostly given by Ko, Lee, and Oh \cite{KoLeeOh_2022}.
Let $\mu$ be a positive Borel measure on $\bR^d$.
Then for $\alpha\in(0,d]$, $\mu$ is said to be $\alpha$-dimensional if there exists a $C>0$ such that
\begin{align}
	\mu(\bB^d(x_0,R)) \leq C R^\alpha,
\end{align}
where $\bB^d(x_0,R)$ denotes a $d$-dimensional ball centered at $x_0$ with radius $R$.
For an $\alpha$-dimensional measure $\mu$, one can define the following quantity:
\begin{align}
	\langle \mu\rangle_\alpha  = \sup_{x_0 \in \bR^d, R>0} R^{-\alpha} \mu(\bB^d(x_0, R)).
\end{align}

It is known in the literature that one can rewrite $\mathrm{M}_\sigma f$ by making use of a measurable function $\bold{t}(\cdot)$:
\begin{align}
	\mathrm{M}_\sigma f(x) = \int f(x- \bold{t}(x)y)~\mathrm{d}\sigma(y).
\end{align}
We define a $d$-dimensional measure $\mu$ by
\begin{align}
	\int_{\bR^{d+1}} F(x,t)~\mathrm{d}\mu(x,t) = \int_{\bB^d(0,1)} F(x, \bold{t}(x))~\mathrm{d}x
\end{align}
for any function $ F\in C_c(\bR^{d+1})$.
Then, $L^p(\bB^d(0,1), \mathrm{d}x)$ norm of $\mathrm{M}_\sigma$ is equivalent to
\begin{align}\label{a_dim_est}
	\Big( \int | \mathrm{A}_\sigma f(x,t) |^p ~\mathrm{d}\mu(x,t) \Big)^{1/p}
\end{align}
That is, $\mathrm{d}\mu$ is a $d$-dimensional measure on $\bR^d \times (0, \infty)$.

We restrict our consideration of $\alpha$-dimensional measures on $\mu$ given by $\omega~\mathrm{d}x\mathrm{d}t$.
Let $\Omega^\alpha$ be a collection of non-negative measurable functions $\omega$ on $\bR^{d+1}$ such that $\omega~\mathrm{d}x\mathrm{d}t$ is $\alpha$-dimensional.
We use a notation $[\omega]_\alpha = \langle \omega~\mathrm{d}x\mathrm{d}t\rangle_\alpha$.
It is known in \cite{KoLeeOh_2022} that \eqref{a_dim_est} is reduced to weighted norms associated with $\omega \in \Omega^d$.
The following lemma was already given in \cite[Lemma~2.19]{KoLeeOh_2022} for dimension $d=4$. One can easily extend it to general dimension $d$ by modifying the proof of the case of $d=4$, so we omit it.
\begin{lemma}\label{lem_a_meas}
	Let $p\in(0, \infty]$, $\alpha\in(0,d+1]$ and $\omega \in \Omega^\alpha$.
	For a function $F \in L^p(\bR^{d+1})$ whose Fourier support is a subset of $\bB^{d+1}(0, \lambda)$, 
	we have
	\begin{align}
		\| F\|_{L^p(\bR^{d+1}, \omega)} \leq C[\omega]_\alpha^{\frac1p} \lambda^{\frac{d+1 - \alpha}{p}} \|F\|_{L^p(\bR^{d+1}, \mathrm{d}x\mathrm{d}t)}.
	\end{align}
\end{lemma}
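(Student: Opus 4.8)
The plan is to reduce Lemma~\ref{lem_a_meas} to a pointwise Fourier-localization inequality at the frequency scale $\lambda$, after which the hypothesis $\omega\in\Omega^\alpha$ will only be used through one dyadic summation. Fix a Schwartz function $\chi$ on $\bR^{d+1}$ with $\widehat\chi$ supported in $\bB^{d+1}(0,2)$ and $\widehat\chi\equiv1$ on $\bB^{d+1}(0,1)$, and put $\chi_\lambda(z)=\lambda^{d+1}\chi(\lambda z)$, so that $F=F*\chi_\lambda$ whenever $\supp\widehat F\subset\bB^{d+1}(0,\lambda)$. The key input is the following: for every $p\in(0,\infty)$ and every large $N$ there is a constant $C_{N,p}$ such that
\[
	|F(z)|^p\ \le\ C_{N,p}\,\lambda^{d+1}\int_{\bR^{d+1}}|F(z-w)|^p\,(1+\lambda|w|)^{-N}\,\mathrm{d}w
\]
for all such $F$ and all $z\in\bR^{d+1}$; the case $p=\infty$ of the lemma is trivial since $\|F\|_{L^\infty(\omega)}\le\|F\|_{L^\infty}$.

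When $p\ge1$ this pointwise estimate is immediate from $F=F*\chi_\lambda$, the tail bound $|\chi_\lambda(w)|\le C_N\lambda^{d+1}(1+\lambda|w|)^{-N}$, and Jensen's inequality applied with the probability density proportional to $\lambda^{d+1}(1+\lambda|w|)^{-N}$, which has finite total mass for $N>d+1$. For $0<p<1$ one runs the standard Peetre maximal function argument: with $G(z)\coloneqq\sup_{v\in\bR^{d+1}}|F(z-v)|(1+\lambda|v|)^{-M}$, one starts from the $p=1$ inequality at the point $z-v$, estimates $|F(z-v-w)|^{1-p}\le(1+\lambda|v+w|)^{M(1-p)}G(z)^{1-p}$, regroups the weight factors, changes variables, divides by $G(z)^{1-p}$ (finite, because a band-limited $L^p$ function is bounded), and takes the supremum over $v$ with $M=M(p)$ and $N$ chosen so that the weight exponents balance; since $|F(z)|\le G(z)$, this gives the displayed bound. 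This is the one step where the Fourier localization of $F$ is genuinely used, and plain convexity does not reach below $L^1$.

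Granting the pointwise bound, I would integrate it against $\omega\,\mathrm{d}x\mathrm{d}t$, use Fubini (all integrands are nonnegative), and translate the inner variable:
\[
	\int_{\bR^{d+1}}|F|^p\,\omega\,\mathrm{d}x\mathrm{d}t\ \lesssim\ \lambda^{d+1}\int_{\bR^{d+1}}|F(y)|^p\Bigl(\int_{\bR^{d+1}}(1+\lambda|w|)^{-N}\,\omega(y+w)\,\mathrm{d}w\Bigr)\mathrm{d}y.
\]
The inner integral is bounded uniformly in $y$ by decomposing the $w$-domain into $\{|w|\le\lambda^{-1}\}$ and the dyadic shells $\{2^{k-1}\lambda^{-1}\le|w|<2^{k}\lambda^{-1}\}$ ($k\ge1$) and invoking $\omega\in\Omega^\alpha$:
\[
	\int_{\bR^{d+1}}(1+\lambda|w|)^{-N}\,\omega(y+w)\,\mathrm{d}w\ \lesssim\ \sum_{k\ge0}2^{-kN}\,\omega\bigl(\bB^{d+1}(y,2^{k+1}\lambda^{-1})\bigr)\ \lesssim\ [\omega]_\alpha\,\lambda^{-\alpha}\sum_{k\ge0}2^{k(\alpha-N)}\ \lesssim\ [\omega]_\alpha\,\lambda^{-\alpha},
\]
valid once $N>\alpha$; since $\alpha\le d+1$, a single choice of $N$ serves all purposes. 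Substituting back and taking $p$-th roots gives $\|F\|_{L^p(\bR^{d+1},\omega)}\lesssim[\omega]_\alpha^{1/p}\lambda^{(d+1-\alpha)/p}\|F\|_{L^p(\bR^{d+1},\mathrm{d}x\mathrm{d}t)}$, which is the claim.

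The genuine obstacle is the $p<1$ instance of the pointwise majorization of $|F(z)|^p$ by a weighted $L^p$-average of $|F|$ at scale $\lambda^{-1}$; everything after that is a routine Fubini-and-geometric-series computation, and the $\alpha$-dimensionality of $\omega$ enters only through the single ball bound $\omega(\bB^{d+1}(y,R))\le[\omega]_\alpha R^\alpha$.
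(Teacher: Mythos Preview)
Your argument is correct and complete. The paper does not actually prove this lemma: it cites \cite[Lemma~2.19]{KoLeeOh_2022} (stated there for $d=4$) and asserts that the general-dimension case follows by the same proof, so there is nothing in the paper to compare your proposal against beyond that reference. The route you take---writing $F=F*\chi_\lambda$, obtaining the pointwise $L^p$-majorization (Jensen for $p\ge1$, the Peetre maximal function for $0<p<1$), then integrating against $\omega$ and collapsing the kernel via the dyadic ball estimate $\omega(\bB^{d+1}(y,R))\le[\omega]_\alpha R^\alpha$---is exactly the standard proof of such Bernstein-type weighted inequalities and is what one finds in the cited source. One small remark: your appeal to ``a band-limited $L^p$ function is bounded'' to ensure $G(z)<\infty$ is fine in this setting, since the functions $F$ to which the lemma is applied in the paper are smooth (convolutions with Schwartz cutoffs), so no density argument is needed.
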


\subsection{$L^p$ improving estimates for multilinear operators with the limited decay condition}

We recall simple estimates for multilinear operators whose symbols satisfy the following decay conditions:
\[
	| \mathfrak{m}(\xi)| \lesssim (1+ |\xi|)^{-s}
\]
for some given $s>0$.
We define the Littlewood-Paley decomposition for further uses.
Let $\Psi$ be a Schwartz function on $\bR^{md}$ such that $\widehat{\Psi}\equiv1$ for $1<|\xi|<2$ and vanishes outside of $\{\xi \in \bR^{md} : 1/2<|\xi|<4\}$.
Define $\Psi_j(\cdot) = 2^{jmd}\Psi(2^j\cdot)$ and $\Phi(\cdot) = 1 - \sum_{j\geq1}\Psi_j$.
If we use the decomposition on $\bR^d$, we say $P_j$ to denote the $j$-th Littlewood-Paley projection.

Then, we have the following lemma:
\begin{lemma}\label{lem_decay}
	Let $m\geq2$ and $T_{\mathfrak{m}}(\rF)$ be a multilinear operator given by
	$$
		T_{\mathfrak{m}}(\rF)(x) 
		:= \int_{\bR^{md}} \re^{2\pi i x\cdot (\xi_1+\cdots+\xi_m)}\mathfrak{m}(\xi) \prod_{i=1}^m \widehat{f}_i(\xi_i) ~\mathrm{d}\xi.
	$$
	Suppose that there is a constant $C>0$ such that $|\mathfrak{m}(\xi)|\leq C(1+|\xi|)^{-s}$ for some $s>0$, 
	and $\supp(\mathfrak{m}) \subset \bA^{md}(2^j)$.
	Then we have
	\begin{align}
		\| T_{\mathfrak{m}}(\rF) \|_{L^2(\bR^d)} \leq &C 2^{-j(s - \frac{(m-1)d}{2})} \prod_{i=1}^m \|f_i\|_{L^2(\bR^d)}.
	\end{align}
\end{lemma}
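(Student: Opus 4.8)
The plan is to reduce the multilinear estimate to a single $L^2\to L^2$ bound by freezing all but one of the input functions and absorbing the frozen factors into $L^\infty$. First I would fix $j$ and write $\mathfrak m = \mathfrak m \cdot \chi$, where $\chi$ is a smooth cutoff adapted to the annulus $\bA^{md}(2^j)$; since $\supp(\mathfrak m)\subset \bA^{md}(2^j)$ this changes nothing but lets me later insert harmless Littlewood--Paley-type factors $\widehat{\Psi_j}(\xi_i)$ in each variable (each $\widehat f_i(\xi_i)$ may be replaced by $\widehat{P_{j+O(1)}f_i}(\xi_i)$, because on the support of $\mathfrak m$ at least one $|\xi_i|\gtrsim 2^j/m$, and the remaining variables are trivially bounded by $2^j$; more carefully one localizes each $\xi_i$ to a ball of radius $\lesssim 2^j$). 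The pointwise bound $|\mathfrak m(\xi)|\le C(1+|\xi|)^{-s}\lesssim 2^{-js}$ on $\bA^{md}(2^j)$ is the only analytic input from the decay hypothesis.

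Next I would run the standard ``put $m-1$ factors in $L^\infty$'' argument. By Cauchy--Schwarz and the frequency localization,
\[
\|\widehat{P_{j}f_i}\|_{L^1(\bR^d)} \lesssim 2^{jd/2}\|f_i\|_{L^2(\bR^d)},
\]
so for any $m-1$ of the indices, say $i=2,\dots,m$, the product $\prod_{i=2}^m \widehat{P_j f_i}(\xi_i)$ is controlled in $L^1_{\xi_2,\dots,\xi_m}$ by $2^{(m-1)jd/2}\prod_{i=2}^m\|f_i\|_2$. Then
\[
\|T_{\mathfrak m}(\rF)\|_{L^2(\bR^d)}
\le \|\mathfrak m\|_{L^\infty}\,
\Bigl\|\,\int \Bigl|\widehat{P_j f_1}(\xi_1)\Bigr|\prod_{i=2}^m\bigl|\widehat{P_j f_i}(\xi_i)\bigr|\,\mathrm d\xi_2\cdots\mathrm d\xi_m\Bigr\|_{L^2_{\xi_1}},
\]
after using that $x\cdot(\xi_1+\dots+\xi_m)$ contributes only a modulation, Plancherel in the output variable, and that convolution/integration in the remaining variables is an $L^2_{\xi_1}\to L^2_{\xi_1}$ bounded operation with the stated $L^1$ norm as its operator norm. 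Combining, $\|T_{\mathfrak m}(\rF)\|_2 \lesssim 2^{-js}\cdot 2^{(m-1)jd/2}\prod_{i=1}^m\|f_i\|_2 = 2^{-j(s-(m-1)d/2)}\prod_{i=1}^m\|f_i\|_2$, which is exactly the claim.

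The only real subtlety — and the step I would be most careful about — is the frequency localization of the inputs: one must legitimately replace each $\widehat f_i$ by a version supported in a ball of radius $O(2^j)$ so that the $L^1$ bound $\|\widehat{P_j f_i}\|_1\lesssim 2^{jd/2}\|f_i\|_2$ applies. This is where the hypothesis $\supp(\mathfrak m)\subset\bA^{md}(2^j)$ is used: on that support $|(\xi_1,\dots,\xi_m)|\le 2^{j+1}$ forces $|\xi_i|\le 2^{j+1}$ for every $i$, so multiplying $\mathfrak m$ by $\prod_i \eta(2^{-j}\xi_i)$ for a suitable fixed bump $\eta\equiv 1$ on $\bB^d(0,2)$ leaves $\mathfrak m$ unchanged while inserting the needed cutoffs. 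Everything else is Cauchy--Schwarz, Plancherel, and Young's inequality, so no genuine obstacle remains; the lemma is essentially a bookkeeping statement once the localization is set up correctly.
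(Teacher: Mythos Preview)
Your argument is correct but follows a different route from the paper's. You use Plancherel on $\bR^d$ plus Young's inequality: after legitimately inserting ball cutoffs $\eta(2^{-j}\xi_i)$ in each variable (your last paragraph handles this correctly), the Fourier transform of $T_{\mathfrak m}(\rF)$ in the output variable is bounded by $\|\mathfrak m\|_\infty$ times an $(m-1)$-fold convolution, and Young together with $\|\widehat f_i\,\eta(2^{-j}\cdot)\|_{L^1}\lesssim 2^{jd/2}\|f_i\|_2$ gives the factor $2^{j(m-1)d/2}$. (Your displayed inequality is written loosely --- the outer $L^2$ variable should really be $\eta=\xi_1+\cdots+\xi_m$ with $\widehat f_1$ evaluated at $\eta-\xi_2-\cdots-\xi_m$ --- but the conclusion you extract is right.) The paper instead regards $T_{\mathfrak m}(\rF)(x)$ as the restriction to the diagonal $\{(x,\dots,x)\}\subset\bR^{md}$ of a function whose Fourier support lies in $\bB^{md}(0,2^{j+2})$, and proves the Bernstein-type trace inequality $\|T_{\mathfrak m}(\rF)\|_{L^2(\bR^d)}\le 2^{j(m-1)d/2}\|T_{\mathfrak m}(\rF)\|_{L^2(\bR^{md})}$ by writing $T_{\mathfrak m}(\rF)(x)=\Psi_j\ast T_{\mathfrak m}(\rF)(x,\dots,x)$, applying H\"older against $|\Psi_j|$, and integrating the kernel in $x$ first; Plancherel on $\bR^{md}$ and $|\mathfrak m|\lesssim 2^{-js}$ then finish. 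Your approach is arguably more direct for $p=2$; the paper's has the advantage that the same computation immediately yields $\|T_{\mathfrak m}(\rF)\|_{L^p(\bR^d)}\le 2^{j(m-1)d/p}\|T_{\mathfrak m}(\rF)\|_{L^p(\bR^{md})}$ for every $p\ge 1$, which they record at the end of their proof.
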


\begin{proof}	
	For $L^2$ norm we begin with
	\begin{align}\label{230904_2006}
		T_\mathfrak{m}(\rF)(x) = \Psi_j \ast T_\mathfrak{m}(\rF)(x, \dots, x),
	\end{align}
	where $\widehat{\Psi}_j$ denotes the $j$-th Littlewood-Paley decomposition in $\bR^{md}$.
	Taking $L^2$ norm to \eqref{230904_2006}, it yields
	\begin{align}\label{230925_1434}
	\begin{split}
		\| T_\mathfrak{m}(\rF) \|_{L^2(\bR^d)}^2 
		&= \int_{\bR^d} \Big| \int_{\bR^{md}} T_\mathfrak{m}(\rF)(y) \Psi_j(x-y_1, \dots, x-y_m)~\mathrm{d}y\Big|^2 ~\mathrm{d}x\\
		&\le \int_{\bR^d}  \int_{\bR^{md}} \Big|T_\mathfrak{m}(\rF)(y) \Big|^2 |\Psi_j(x-y_1, \dots, x-y_m)|~\mathrm{d}y ~\mathrm{d}x\\
		&\le 2^{j(m-1)d} \| T_\mathfrak{m}(\rF) \|_{L^2(\bR^{md})}^2.
	\end{split}
	\end{align}
	Here we apply H\"older's inequality in the first inequality and compute $x$-integral first in the second inequality.
	By the Plancherel theorem and the conditions of $\mathfrak{m}$, we obtain
	\[
		\| T_\mathfrak{m}(\rF) \|_{L^2(\bR^d)} \leq 2^{-j(s - \frac{(m-1)d}{2})} \prod_{i=1}^m \|f_i\|_{L^2(\bR^d)}.
	\]
	By \eqref{230925_1434} one can check that for general $p\geq1$
	\[
		\| T_\mathfrak{m}(\rF) \|_{L^p(\bR^d)} \leq 2^{j\frac{(m-1)d}{p}} \| T_\mathfrak{m}(\rF) \|_{L^p(\bR^{md})}.
	\]
\end{proof}

\section{Proof of Theorem~\ref{thm_loc}}

We make use of the Littlewood-Paley decomposition to obtain
\begin{align}
	\mathrm{M}_\sigma^{loc} (\rF)(x) \leq \sum_{j\geq0} \mathrm{M}_{\sigma_j}^{loc} (\rF)(x).
\end{align}
Note that $ \sigma_j  = \sigma\ast \Psi_j$ for $j>0$ and $\sigma_0 = \sigma\ast\Phi$.
It is worth noting that $\mathrm{M}_{\sigma_0}^{loc}(\rF)$ is bounded pointwisely by product of the Hardy-Littlewood maximal functions $\mathrm{M}_{HL}(f_i)$.
Thus we only have to consider $j\geq1$ cases.

Note that $\mathrm{M}_{\sigma_j}^{loc}(\rF)(x) = \sup_{1<t<2}|\mathrm{A}_{\sigma_j}(\rF)(x,t)|$, 
where $\mathrm{A}_{\sigma_j}(\rF)(x,t)$ is given by
$$
	\mathrm{A}_{\sigma_j}(\rF)(x,t) 
	= \int_{\bR^{md}} \re^{2\pi i x\cdot (\xi_1+\cdots+\xi_m)} \widehat{\sigma}_j(t\xi) \prod_{i=1}^m \widehat{f}_j(\xi_i) ~\mathrm{d}\xi.
$$
Since $1<t<2$, one can say that $2^{j-2} \leq |\xi|\leq 2^{j+2}$ in the support of $\widehat{\sigma}_j(t\cdot)$, hence $|\xi_i|\lesssim 2^{j}$ for each $i=1, \dots, m$. 
Moreover, there is at least one $i=1, \dots, m$ such that $2^{j-2} \leq |\xi_i| \leq 2^{j+2}$.
We first obtain the following lemma:
\begin{lemma}\label{lem_loc_max}
Let $s(m,d) = s  - \frac{(m-1)d}{2}-\frac{1}{2}$. Then we have
	\begin{align}\label{230914_0023}
		\Big\| \sup_{1<t<2}\Big| \mathrm{A}_{\sigma_j}(\rF)\Big| \Big\|_{L^{2/m}(\bR^d)} \leq C 2^{-js(m,d)}\prod_{i=1}^m\|f_i\|_{L^2(\bR^d)}.
	\end{align}
\end{lemma}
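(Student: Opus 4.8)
The plan is to estimate the single-scale maximal function $\sup_{1<t<2}|\mathrm{A}_{\sigma_j}(\rF)|$ by trading the supremum for an integral over $t$ via the fundamental theorem of calculus, then applying Lemma~\ref{lem_decay} to the resulting multiplier operators. Concretely, for each fixed $x$ write
\[
	\sup_{1<t<2}|\mathrm{A}_{\sigma_j}(\rF)(x,t)|^2 \leq |\mathrm{A}_{\sigma_j}(\rF)(x,1)|^2 + 2\int_1^2 |\mathrm{A}_{\sigma_j}(\rF)(x,t)|\,|\partial_t \mathrm{A}_{\sigma_j}(\rF)(x,t)|\,\mathrm{d}t,
\]
and then apply $\|\cdot\|_{L^{1/m}_x}$ (since $\|\,|g|^2\,\|_{L^{1/m}} = \|g\|_{L^{2/m}}^2$) followed by the Cauchy–Schwarz inequality in $t$ and Hölder in $x$. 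This reduces matters to bounding $\|\mathrm{A}_{\sigma_j}(\rF)(\cdot,t)\|_{L^{2/m}}$ and $\|\partial_t \mathrm{A}_{\sigma_j}(\rF)(\cdot,t)\|_{L^{2/m}}$ uniformly in $t\in(1,2)$.

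For the fixed-time bounds, I would first pass from $L^{2/m}$ to $L^2$. Since $\widehat{\sigma}_j(t\cdot)$ is supported where $|\xi|\sim 2^j$, each $\widehat{f}_i(\xi_i)$ may be replaced by $P_{\lesssim j}f_i$, and at least one index carries frequency $\sim 2^j$; by symmetrizing (summing over which index is the ``high'' one) one writes $\mathrm{A}_{\sigma_j}(\rF) = \sum_{k=1}^m \mathrm{A}_{\sigma_j}^{(k)}(\rF)$ where in the $k$-th piece $f_k$ is replaced by $P_{\sim j}f_k$. Applying Hölder's inequality $\|g_1\cdots g_m\|_{L^{2/m}} \leq \prod \|g_i\|_{L^2}$ is not directly available since $\mathrm{A}_{\sigma_j}$ is not a product; instead I use Lemma~\ref{lem_decay}. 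The multiplier $\mathfrak{m}(\xi) = \widehat{\sigma}_j(t\xi)$ obeys $|\mathfrak{m}(\xi)| \lesssim (1+|\xi|)^{-s} \sim 2^{-js}$ on its support $\bA^{md}(2^j)$, so Lemma~\ref{lem_decay} gives
\[
	\|\mathrm{A}_{\sigma_j}(\rF)(\cdot,t)\|_{L^2(\bR^d)} \lesssim 2^{-j(s - \frac{(m-1)d}{2})} \prod_{i=1}^m \|f_i\|_{L^2}.
\]
For the derivative, $\partial_t \mathrm{A}_{\sigma_j}(\rF)(\cdot,t)$ has multiplier $2\pi i (\xi\cdot\nabla)\widehat{\sigma}_j(t\xi)$, which still satisfies $|\cdot|\lesssim 2^j \cdot 2^{-js}$ on $\bA^{md}(2^j)$ — we lose exactly one factor of $2^j$. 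Hence Lemma~\ref{lem_decay} yields $\|\partial_t \mathrm{A}_{\sigma_j}(\rF)(\cdot,t)\|_{L^2} \lesssim 2^{j} 2^{-j(s-\frac{(m-1)d}{2})}\prod\|f_i\|_{L^2}$. Combining via Cauchy–Schwarz, $\|\sup_t |\mathrm{A}_{\sigma_j}(\rF)|\|_{L^{2/m}}^2 \lesssim 2^{-2j(s - \frac{(m-1)d}{2})} \cdot 2^{j}\prod\|f_i\|_{L^2}^2$, i.e.\ $\|\sup_t|\mathrm{A}_{\sigma_j}(\rF)|\|_{L^{2/m}} \lesssim 2^{-j(s - \frac{(m-1)d}{2} - \frac12)}\prod\|f_i\|_{L^2} = 2^{-js(m,d)}\prod\|f_i\|_{L^2}$, which is exactly \eqref{230914_0023}.

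The main obstacle is the passage from the $L^{2/m}$ target to the $L^2$ estimates supplied by Lemma~\ref{lem_decay}: since $2/m < 2$ when $m>2$, one cannot simply Hölder out the functions, and the Sobolev-embedding / fundamental-theorem-of-calculus step that converts $\sup_t$ into $\int_t$ must be performed \emph{before} taking the low-integrability norm, using that $\|\,|g|^2\|_{L^{p/2}} = \|g\|_{L^p}^2$ for any $p>0$ so that the quasi-norm behaves well under squaring. A secondary technical point is verifying that the derivative multiplier $(\xi\cdot\nabla)\widehat{\sigma}_j(t\xi)$ still obeys the hypothesis of Lemma~\ref{lem_decay} with only one power of $2^j$ lost; this follows because $\widehat{\sigma}_j = \widehat{\sigma}\,\widehat{\Psi}_j$ and differentiating $\widehat{\Psi}_j(t\xi)$ in $t$ costs a factor $|\xi|\sim 2^j$ while differentiating $\widehat{\sigma}(t\xi)$ costs at most $|\xi|$ times the (bounded) gradient bound on $\widehat{\sigma}$, both controlled on the annulus $\bA^{md}(2^j)$. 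Once these two points are handled, the rest is the routine Cauchy–Schwarz/Hölder bookkeeping sketched above, and summing \eqref{230914_0023} over $j\geq 1$ under the hypothesis $s > \frac{(m-1)d}{2}+\frac12$ (so that $s(m,d)>0$) completes the proof of the $p=2/m$ case of Theorem~\ref{thm_loc}.
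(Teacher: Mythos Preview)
Your fundamental-theorem-of-calculus reduction of $\sup_{1<t<2}$ to fixed-time bounds is a legitimate alternative to the paper's $\alpha$-dimensional-measure lemma (Lemma~\ref{lem_a_meas}), and both routes cost exactly the factor $2^{j/2}$. However, there is a genuine gap at the step where you pass from the $L^2(\bR^d)$ bounds supplied by Lemma~\ref{lem_decay} to the $L^{2/m}(\bR^d)$ bound asserted in \eqref{230914_0023}. After your FTC inequality, Cauchy--Schwarz and H\"older give
\[
  \Big\|\sup_{1<t<2}|\mathrm{A}_{\sigma_j}(\rF)|\Big\|_{L^{2/m}}^2 \lesssim \|\mathrm{A}_{\sigma_j}(\rF)(\cdot,1)\|_{L^{2/m}}^2 + \|\mathrm{A}_{\sigma_j}(\rF)\|_{L^{2/m}_x L^2_t}\,\|\partial_t\mathrm{A}_{\sigma_j}(\rF)\|_{L^{2/m}_x L^2_t},
\]
so to close you need $L^{2/m}_x$ control on $\mathrm{A}_{\sigma_j}(\rF)(\cdot,t)$ and its $t$-derivative; Lemma~\ref{lem_decay} gives only $L^2_x$. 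Since $2/m<2$ there is no global inequality $\|\cdot\|_{L^{2/m}(\bR^d)}\lesssim\|\cdot\|_{L^2(\bR^d)}$, and performing the FTC step ``before taking the quasi-norm'' does not help: the $L^{2/m}$ on the left still forces $L^{2/m}$ on the right. Your argument as written proves only the $L^2$ maximal bound, not the $L^{2/m}$ one.

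This is precisely why the paper first localizes. Lemma~\ref{lem_local} shows the kernel of $\mathrm{A}_{\sigma_j}$ decays like $2^{-jN}(1+|\mathbf{x}|)^{-N}$ outside a fixed ball $\bB^d(0,R)$, so the $L^{2/m}(\bR^d)$ norm splits into an $L^{2/m}(\bB^d(0,R))$ piece, where H\"older gives $\|\cdot\|_{L^{2/m}(\bB^d(0,R))}\lesssim_R\|\cdot\|_{L^2}$ and the $L^2$ machinery applies, plus a remainder controlled by $2^{-jN}\prod_i M_{HL}f_i$. Your sketch omits this localization entirely. A secondary issue: your claim that the derivative multiplier $\xi\cdot(\nabla\widehat{\sigma}_j)(t\xi)$ still obeys a $2^{-js}$ bound (with one extra $2^j$) implicitly uses $|\nabla\widehat{\sigma}(\xi)|\lesssim(1+|\xi|)^{-s}$, which is not part of the hypothesis on $\sigma$. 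The paper avoids ever differentiating $\widehat{\sigma}$ by introducing $\widetilde{\mathrm A}_{\sigma_j}$ with an explicit cutoff in the dual variable $\tau$, forcing the space-time Fourier support into $\bB^{d+1}(0,m2^{j+1})$ and then invoking Lemma~\ref{lem_a_meas}.
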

As a consequence of the lemma~\ref{lem_loc_max}, we show theorem~\ref{thm_loc}. In fact, by the lemma~\ref{lem_loc_max}, it follows that
\begin{align}\label{230830_2323}
\begin{split}
	\|\mathrm{M}_\sigma^{loc}(\rF)\|_{L^{2/m}(\bR^d)}^{2/m} 
	\leq &\sum_{j\geq0} \Big\| \sup_{1<t<2}\Big| \mathrm{A}_{\sigma_j}(\rF)(\cdot, t)\Big| \Big\|_{L^{2/m}(\bR^d)}^{2/m}\\
	\leq &C \sum_{j\geq0} 2^{-js(m,d)2/m}\prod_{i=1}^m\|f_i\|_{L^2(\bR^d)}^{2/m}.
\end{split}
\end{align}
 Since $s(m,d)>0$, this proves the theorem.
Therefore it remains to prove Lemma~\ref{lem_loc_max}.

\subsection{Proof of Lemma~\ref{lem_loc_max}}

In order to show lemma ~\ref{lem_loc_max}, we use the following lemma in \cite[Lemma~2.3]{KoLeeOh_2022}. 
\begin{lemma}\label{lem_local}
	Let $R = 1+4 \text{diam}(\supp(\sigma))$ and 
	\[
		K_j(\mathbf{x},t) \coloneqq \int_{\bR^{md}} \mathrm{e}^{2\pi i \mathbf{x}\cdot\xi} \int \mathrm{e}^{-2\pi i t(y\cdot\xi)}~\mathrm{d}\sigma(y) \widehat{\Psi}(2^{-j}\xi)~\mathrm{d}\xi
	\]
	for $j\geq0$ and $(\mathbf{x},t) \in \bR^{md} \times \bR$.
	If $|\mathbf{x}| \geq R$ and $|t|\leq 2$, then $|K_j(\mathbf{x},t)|\lesssim 2^{-jN}(1+|\mathbf{x}|)^{-N}$ for any $N>0$.
\end{lemma}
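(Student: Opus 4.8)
The plan is to turn the oscillatory integral defining $K_j$ into an explicit convolution and then read off the decay from the Schwartz tails of $\Psi$.

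First I would interchange the two integrations (legitimate because $\widehat\Psi(2^{-j}\cdot)$ is compactly supported and $\sigma$ is a finite measure) and use Fourier inversion on the inner $\xi$-integral to obtain the closed form
\begin{align*}
	K_j(\mathbf{x},t)=\int_{\bR^{md}}\Big(\int_{\bR^{md}}\re^{2\pi i\xi\cdot(\mathbf{x}-ty)}\widehat\Psi(2^{-j}\xi)\,\mathrm{d}\xi\Big)\mathrm{d}\sigma(y)=\int_{\bR^{md}}\Psi_j(\mathbf{x}-ty)\,\mathrm{d}\sigma(y),
\end{align*}
where $\Psi_j(z)=2^{jmd}\Psi(2^j z)$ is the inverse Fourier transform of $\widehat\Psi(2^{-j}\cdot)$; note that $\Psi\in\mathscr{S}(\bR^{md})$ because $\widehat\Psi\in C_c^\infty$, so $\Psi_j$ has good decay at every scale.

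Second, I would exploit the separation built into the statement. For $y\in\supp(\sigma)$ and $|t|\le2$ the point $ty$ ranges over a fixed bounded set, and the constant $R=1+4\,\diam(\supp(\sigma))$ is chosen precisely so that $|\mathbf{x}|\ge R$ forces $|\mathbf{x}-ty|\gtrsim|\mathbf{x}|$ for all such $y,t$; moreover $|\mathbf{x}|\ge R\ge1$, so $|\mathbf{x}-ty|\gtrsim 1+|\mathbf{x}|$. Fix any $M>0$. The Schwartz bound $|\Psi(z)|\lesssim_M(1+|z|)^{-M}$ together with $2^j\ge1$ gives, uniformly in $y\in\supp(\sigma)$ and $|t|\le2$,
\begin{align*}
	|\Psi_j(\mathbf{x}-ty)|\lesssim_M 2^{jmd}\big(1+2^j|\mathbf{x}-ty|\big)^{-M}\lesssim_M 2^{jmd}\big(2^j(1+|\mathbf{x}|)\big)^{-M}=2^{-j(M-md)}(1+|\mathbf{x}|)^{-M}.
\end{align*}
Choosing $M=N+md$ and integrating over $y$ against the finite measure $\sigma$ yields $|K_j(\mathbf{x},t)|\lesssim_N 2^{-jN}(1+|\mathbf{x}|)^{-N}$, uniformly for $|t|\le2$, which is the claim.

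There is no genuine obstacle here: this is a textbook non-stationary-phase estimate once the explicit formula is in hand. The only points needing a little bookkeeping are (i) checking that $R=1+4\,\diam(\supp(\sigma))$ indeed produces the separation $|\mathbf{x}-ty|\gtrsim|\mathbf{x}|$, an elementary triangle inequality, and (ii) balancing the exponents so that the $2^{jmd}$ coming from the $L^1$-normalization of $\Psi_j$ is swallowed by the gain $2^{-jM}$, which is why one takes $M$ large. One could instead integrate by parts $N$ times in $\xi$ against $\re^{2\pi i\mathbf{x}\cdot\xi}$ using $(\mathbf{x}\cdot\nabla_\xi)^N\re^{2\pi i\mathbf{x}\cdot\xi}=(2\pi i|\mathbf{x}|^2)^N\re^{2\pi i\mathbf{x}\cdot\xi}$, but then one must remember that derivatives landing on $\widehat{\mathrm{d}\sigma}(t\xi)$ do not themselves produce the factor $2^{-j}$; keeping track of $\xi$-Fourier supports — which is exactly what the convolution formula above does — makes this transparent, since $\xi\mapsto\widehat{\mathrm{d}\sigma}(t\xi)\widehat\Psi(2^{-j}\xi)$ has $\xi$-Fourier transform supported within $O(2^{-j})$ of $-t\cdot\supp(\sigma)\subset\bB^{md}(0,R)$.
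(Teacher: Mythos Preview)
Your proof is correct and takes essentially the same approach as the paper's: after Fubini, the paper rescales $\xi\mapsto 2^{-j}\xi$, observes that the phase $2^j(\mathbf{x}-ty)\cdot\xi$ has gradient $\gtrsim|\mathbf{x}|$ on the region in question, and integrates by parts repeatedly to obtain the bound $2^{jmd}(1+2^j|\mathbf{x}|)^{-L}$, which is exactly the Schwartz decay of $\Psi_j(\mathbf{x}-ty)$ that you invoke directly. The only difference is packaging---you write the $\xi$-integral as $\Psi_j(\mathbf{x}-ty)$ and cite Schwartz decay, while the paper spells out the integration by parts that proves that decay---so the arguments are equivalent.
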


\begin{proof}
	By Fubini's theorem and change of variables, we have
	\begin{align}
	\begin{split}
		K_j(\mathbf{x},t) 
		= 2^{jmd}\int  \int_{\bR^{md}} \mathrm{e}^{2\pi i 2^j (\mathbf{x}\cdot\xi- ty\cdot\xi)}  \widehat{\Psi}(\xi)~\mathrm{d}\xi\mathrm{d}\sigma(y).
	\end{split}
	\end{align}
	Note that $|\nabla_\xi(\mathbf{x}\cdot\xi- ty\cdot\xi)| \geq |\mathbf{x}|/2$ for $|\mathbf{x}|\geq R$ and $|t|\leq2$.
	Then by iterated use of integration by parts in $\xi$, we have
	$$
		|K_j(\mathbf{x},t)|\chi_{|\mathbf{x}|\geq R}(\mathbf{x}) \chi_{|t|\leq2}(t) \lesssim 2^{jmd} (1+2^{j}|\mathbf{x}|)^{-L}\chi_{|\mathbf{x}|\geq R}(\mathbf{x})
	$$
	for any $L>0$.
	Taking $L\geq md+2N$ for a fixed number $N>0$, it follows that 
	$$
		|K_j(\mathbf{x},t)|\chi_{|\mathbf{x}|\geq R}(\mathbf{x}) \chi_{|t|\leq2}(t)  \leq 2^{-jN}(1+|\mathbf{x}|)^{-N}.
	$$
\end{proof}
We split the integral range $\bR^d$ into the Ball of radius $R$ centered at the origin and the complement, and apply Lemma~\ref{lem_local} to get
\begin{align}
\begin{split}
	\Big\| &\sup_{1<t<2}\Big| \mathrm{A}_{\sigma_j}(\rF)\Big| \Big\|_{L^{2/m}(\bR^d)} \\
	\lesssim &\Big\| \sup_{1<t<2}\Big| \mathrm{A}_{\sigma_j}(\rF)\Big| \Big\|_{L^{2/m}(\bB^d(0,R))} + \Big\| \sup_{1<t<2}\Big| \mathrm{A}_{\sigma_j}(\rF)\Big| \Big\|_{L^{2/m}(\bB^d(0,R)^\mathsf{c})}\\
	\lesssim &\Big\| \sup_{1<t<2}\Big| \mathrm{A}_{\sigma_j}(\rF)\Big| \Big\|_{L^{2/m}(\bB^d(0,R))} + 2^{-jN}\Big\| \prod_{i=1}^m M_{HL}f_i\Big\|_{L^{2/m}(\bB^d(0,R)^\mathsf{c})},
\end{split}
\end{align}
where $A^\mathsf{c}$ denotes the complement of a set $A$.
Therefore to prove Lemma~\ref{lem_loc_max}, it suffices to show \eqref{230914_0023} with left-hand side replaced in to $L^{2/m}(\bB^d(0,R))$-norm since the other part can be controlled by the right side norm from the Hardy-Littlewood maximal estimates.

We define $\widetilde{\mathrm{A}}_{\sigma_j}$ as
\begin{align*}
	\widetilde{\mathrm{A}}_{\sigma_j}(\rF)(x,t)
	&= \int_{\bR}\int_{\bR^{md}} \re^{2\pi i (x\cdot(\xi_1+\cdots+\xi_m)+t\tau)} m(\xi, \tau) \widehat{\Psi}(2^{-j}\xi) \widehat{\varphi}(2^{-j}R^{-1}\tau) \prod_{i=1}^m \widehat{f}_i(\xi_i)~\mathrm{d}\xi \mathrm{d}\tau,
\end{align*}	
where $m(\xi, \tau)= \int_\bR \int e^{-2\pi i s(\tau +y\cdot\xi)} \rho(s)~\mathrm{d}\sigma(y) \mathrm{d}s$ with a smooth function $\rho\in C_c^\infty(\bR)$ such that $\rho\equiv1$ on $[1,2]$ and $R = 1+4 \text{diam}(\supp(\sigma))$.
Let $\mathcal{F}$ denote the space-time Fourier transform.
For $\mathrm{R}_{\sigma_j} = \mathrm{A}_{\sigma_j} - \widetilde{\mathrm{A}}_{\sigma_j}$, $\cF[\rR_{\sigma_j}(\rF)]$ is supported outside of $\bB^{d+1}(0, m2^{j+1})$, and the kernel of $\rR_{\sigma_j}$ has nice decay property.
Then, we have the following lemma:
\begin{lemma}\label{lem_out}
	Let $m(\xi, \tau)\widehat{\Psi}_j(\xi)(1 - \widehat{\varphi}_j)(R^{-1}\tau)$ be a symbol of $\mathrm{R}_{\sigma_j}$ with respect to space-time Fourier transform.
	Then for any $N\geq d$ we have
	\begin{align}
		\cF^{-1}[m(\xi, \tau)\widehat{\Psi}_j(\xi)(1 - \widehat{\varphi}_j)(R^{-1}\tau)](\mathbf{x},t) \leq C_N 2^{-jN} (1+|\mathbf{x}|)^{-N} (1+|t|)^{-N},
	\end{align}
	where $C_N$ is independent of $j$ and $\mathbf{x}, \xi \in \bR^{md}$.
\end{lemma}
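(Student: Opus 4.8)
The plan is to prove this by a direct integration-by-parts estimate on the kernel, exploiting that the frequency supports of the two pieces $\widehat{\Psi}_j(\xi)$ and $(1-\widehat{\varphi}_j)(R^{-1}\tau)$ force the combined frequency variable $(\xi,\tau)$ to live at scale $\gtrsim 2^j$ in either the $\xi$ or the $\tau$ direction. First I would write out
\[
	\cF^{-1}[m(\xi,\tau)\widehat{\Psi}_j(\xi)(1-\widehat\varphi_j)(R^{-1}\tau)](\mathbf{x},t)
	= \int_{\bR}\int_{\bR^{md}} \re^{2\pi i(\mathbf{x}\cdot\xi + t\tau)} m(\xi,\tau)\,\widehat\Psi(2^{-j}\xi)\,(1-\widehat\varphi)(2^{-j}R^{-1}\tau)\dd\xi\dd\tau,
\]
and insert the definition $m(\xi,\tau) = \int_\bR\int \re^{-2\pi i s(\tau + y\cdot\xi)}\rho(s)\dd\sigma(y)\dd s$, then change variables $\xi\mapsto 2^j\xi$, $\tau\mapsto 2^j R^{-1}\tau$ to normalize both cutoffs to unit scale. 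After this rescaling the phase becomes $2^j\bigl[(\mathbf{x} - sy)\cdot\xi + (t/R - s)\tau\bigr]$ (up to the constant $R$), with $s$ ranging over $\supp\rho$, which is compact, say contained in $[\tfrac12,3]$, and $y\in\supp\sigma\subset\bB^{md}(0,1)$.

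The key point is a lower bound on the gradient of this phase in $(\xi,\tau)$. On the support of the rescaled symbol we have $|\xi|\sim 1$ or $|\tau|\gtrsim 1$: indeed $\widehat\Psi$ is supported in $\{1/2<|\xi|<4\}$, so in fact $|\xi|\sim 1$ always; hence $|\nabla_\xi(\text{phase})/2^j| = |\mathbf{x}-sy|\geq |\mathbf{x}| - s|y| \geq |\mathbf{x}| - 3$, which is $\gtrsim |\mathbf{x}|$ once $|\mathbf{x}|\geq C$ for a suitable absolute constant, and meanwhile $|\partial_\tau(\text{phase})/2^j| = |t/R - s|$. Combining, $|\nabla_{\xi,\tau}(\text{phase})| \gtrsim 2^j(|\mathbf{x}| + 1)$ when $|\mathbf{x}|$ is large, and when $|\mathbf{x}|$ is bounded I instead use that $|\tau|\gtrsim 1$ on the support of $(1-\widehat\varphi)$ forces the $\tau$-derivative to dominate only after an additional argument—more precisely, I would split into the region where $|t/R-s|$ can be small for some $s\in\supp\rho$ (a bounded set of $t$) and its complement, and handle small bounded $t$ by the trivial bound $\lesssim 2^{j(md+1)}$ on the symbol integrated over a region of measure $\lesssim 2^{j(md+1)}$, which is acceptable since $(1+|\mathbf{x}|)^{-N}(1+|t|)^{-N}$ is then bounded below. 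The cleaner route, which I would actually take, is: since the symbol $m$ is itself smooth and rapidly decaying in $(\xi,\tau)$ away from the cone $\tau = -y\cdot\xi$—one checks $|\partial^\alpha_\xi\partial^\beta_\tau m(\xi,\tau)| \lesssim_{\alpha,\beta,N} (1+|\xi|)^{|\alpha|}(1+|\tau|+|\xi|)^{-N}$ by non-stationary phase in $s$ using $|\partial_s(s\tau + sy\cdot\xi)| = |\tau + y\cdot\xi|\gtrsim |\tau|+|\xi|$ on the relevant region since $|y|\leq 1$ and $|\xi|\sim 2^j$ while $|\tau|\gtrsim 2^j$—we get rapid decay in $2^j$ for free, and then integrate by parts in $\xi$ alone against $\re^{2\pi i \mathbf{x}\cdot\xi}$ to produce $(1+|\mathbf{x}|)^{-N}$, and separately in $\tau$ against $\re^{2\pi i t\tau}$ to produce $(1+|t|)^{-N}$.

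Carrying this out: I would integrate by parts $L$ times in $\xi$ and $L'$ times in $\tau$, landing derivatives on $m(\xi,\tau)\widehat\Psi(2^{-j}\xi)(1-\widehat\varphi)(2^{-j}R^{-1}\tau)$; each $\xi$-derivative of $\widehat\Psi(2^{-j}\xi)$ costs $2^{-j}$, each $\tau$-derivative of the $\tau$-cutoff costs $2^{-j}$, and each derivative of $m$ is harmless by the symbol bounds above; after accounting for the Jacobian $2^{jmd}\cdot 2^j R^{-1} = R^{-1}2^{j(md+1)}$ from the change of variables and the volume $|\xi|\sim 1$, $|\tau|\lesssim$ (anything, since $(1-\widehat\varphi)$ only excludes a bounded region but $m$ decays), one collects
\[
	|\cF^{-1}[\cdots](\mathbf{x},t)| \lesssim 2^{j(md+1)}\cdot 2^{-jL}\cdot 2^{-jL'}\cdot 2^{-jN_0}\,(2^j|\mathbf{x}|)^{-L}(2^j|t|)^{-L'},
\]
where $N_0$ comes from the decay of $m$ in the full variable; choosing $L,L',N_0$ large relative to $N$ and $md$ yields the claimed $C_N 2^{-jN}(1+|\mathbf{x}|)^{-N}(1+|t|)^{-N}$ after also treating $|\mathbf{x}|\lesssim 1$ and $|t|\lesssim 1$ by the trivial volume bound, which already carries the $2^{-jN}$ factor once $N_0 + L + L' - (md+1)\geq N$. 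The main obstacle is bookkeeping the simultaneous decay in all three of $2^j$, $|\mathbf{x}|$, $|t|$ without losing the $2^{-jN}$ gain—in particular verifying the symbol estimate $|\partial^\alpha_\xi\partial^\beta_\tau m| \lesssim (1+|\xi|)^{|\alpha|}(1+|\tau|+|\xi|)^{-N}$ via stationary/non-stationary phase in the $s$-integral, which is where the geometry ($\supp\sigma$ bounded, hence $|\tau + y\cdot\xi|$ comparable to $|\tau|+|\xi|$ on the support after the cutoffs) really enters. Everything else is routine integration by parts.
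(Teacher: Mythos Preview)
Your ``cleaner route'' is correct and is essentially the paper's argument: the paper writes $m(\xi,\tau)=\int\widehat\rho(\tau+y\cdot\xi)\,\mathrm{d}\sigma(y)$ (which is exactly your non-stationary phase in $s$), uses that on the support $|\tau|\gtrsim R2^{j}$ while $|y\cdot\xi|\lesssim 2^{j}$ so $|\tau+y\cdot\xi|\gtrsim|\tau|$, extracts Schwartz decay $(1+|\tau|)^{-L}$ from $\widehat\rho$, and then integrates by parts in $\xi$ and $\tau$ to get the $(1+|\mathbf{x}|)^{-N}(1+|t|)^{-N}$ factors. Your first attempt (the direct $(\xi,\tau)$-phase analysis with $s$ still inside) was rightly abandoned, since $s$ is an integration variable and the phase gradient $|\mathbf{x}-sy|$ need not be bounded below uniformly in $s$ for small $|\mathbf{x}|$; the pivot to symbol bounds on $m$ is exactly what the paper does.
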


\begin{proof}
	Note that 
	\[
		m(\xi, \tau) = \int \widehat{\rho}(\tau + y\cdot \xi)~\mathrm{d}\sigma(y).
	\]
	For multi-indeces $\alpha$ with $|\alpha|=N$, it follows that
	\begin{align*}
		| \partial_\xi^\alpha \partial_\tau^N \big(m(\xi, \tau) \widehat{\Psi}_j(\xi)(1 - \widehat{\varphi}_j)(R^{-1}\tau)\big)|
	\end{align*}
	is bounded by finite sum of the following term:
	\begin{align}\label{eq_2308_1}
		\int  | \widehat{\rho}^{(|\alpha_1| +N)}(\tau + y\cdot\xi) | ~\mathrm{d}\sigma(y) \times 2^{-j|\alpha_2|} \widehat{\Psi}_j(\xi) (1 - \widehat{\varphi}_j)(R^{-1}\tau),
	\end{align}
	where $\alpha = \alpha_1 + \alpha_2$.
	Since $|\xi|\sim 2^j$, $|\tau| > 2^j R$, and $\rho \in C_c^\infty(\bR)$, for any $L\geq0$ there exists a constant $C_L$ such that
	\begin{align}
	  \eqref{eq_2308_1} 
	  &\leq C_L  2^{-j|\alpha_2|} \frac{1}{(1+ |\tau|)^L} \widehat{\Psi}_j(\xi)(1 - \widehat{\varphi}_j)(R^{-1}\tau)\nonumber\\
	  &\leq C_L \frac{1}{(1+ |\tau|)^{L/2}} (R 2^j)^{-L/2}\widehat{\Psi}_j(\xi)\label{eq_2308_2}.
	\end{align}
	By \eqref{eq_2308_2} and the integration by parts, it follows that
	\begin{align*}
		\cF^{-1}[m(\xi, \tau)\widehat{\Psi}_j(\xi)&(1 - \widehat{\varphi}_j)(R^{-1}\tau)](\mathbf{x},t)\\
		\lesssim_R \,&C_L (1+|\mathbf{x}|)^{-N} (1+|t|)^{-N} 2^{-j(L/2 -md)}.
	\end{align*}
	If we choose $L\geq2(N+md)$, then we complete the proof of the lemma. 
\end{proof}

Now we return to the proof of Lemma ~\ref{lem_loc_max}. For $\mathrm{R}_{\sigma_j}$, apply Lemma~\ref{lem_out} to have
\[
	\sup_{1<t<2} | \mathrm{R}_{\sigma_j}(\rF)(x,t)| \leq 2^{-jN} \prod_{i=1}^m \mathrm{M}_{HL}(f_i)(x).
\]
$N$ could be sufficiently large, so we obtain the desired result.

To deal with $\widetilde{\mathrm{A}}_{\sigma_j}$, we observe that the Fourier transform of $\mathrm{A}_{\sigma_j}$.
The Fourier transform in $x$ is
\begin{align}
\begin{split}
	\widehat{\mathrm{A}_{\sigma_j}(\mathrm{F})}(\cdot, t)(\xi_1)
	=&\int_{\bR^{(m-1)d}} \widehat{\mathrm{d}\sigma}_j(t\xi_1 - t\xi_2,  \dots, t\xi_{m-1}-t\xi_m, t\xi_m) \\
	&\quad\quad\times \widehat{f}_1(\xi_1-\xi_2) \cdots \widehat{f}_{m-1}(\xi_{m-1}-\xi_m) \widehat{f}_m(\xi_m) 
	~\mathrm{d}\xi_2\cdots \mathrm{d}\xi_m.
\end{split}
\end{align}
Since $\supp(\widehat{f_i}) \subset \mathbb{B}^d(0, 2^j)$ and $1<t<2$,
it implies that $\xi_1 \in \mathbb{B}^d(0, m 2^{j+1})$,  and $\cF[\widetilde{\rA}_{\sigma_j}(\rF)](\xi_1,\tau)$ is supported in $\bB^{d+1}(0, m2^{j+1})$.
Since $|\widehat{\mathrm{d}\sigma}(\xi)| \lesssim (1+|\xi|)^{-s}$ and $\widehat{\mathrm{d}\sigma_j}$ is supported in $\bA^{md}(2^j)$, we use Young's inequality in $t$-variable and Lemma~\ref{lem_decay} in $x$-variable to obtain
\begin{align}\label{230831_2300}
\begin{split}
	\| \widetilde{\rA}_{\sigma_j}(\rF)(\cdot, \cdot)\|_{L^2(\bR^d\times [1,2])} 
	&\le \| {\rA}_{\sigma_j}(\rF)(\cdot, \cdot)\|_{L^2(\bR^d\times [1,2])}\\
	&\le C \,2^{-j(s-\frac{(m-1)d}{2})} \prod_{i=1}^m \|f_i\|_{L^2(\bR^d)}.
\end{split}
\end{align}
To estimate our maximal estimate, we use H\"older's inequality and Lemma~\ref{lem_a_meas},
which is due to the equivalence \eqref{a_dim_est}.
Then, 
\begin{align}\label{231120_1745}
\begin{split}
	\Big\| \sup_{1<t<2} \Big| \widetilde{\rA}_{\sigma_j}(\rF)(\cdot, t) \Big| \Big\|_{L^{2/m}(\bB^d(0,R))} 
	&\le R^{\frac{(m-1)d}{2}} \Big\| \sup_{1<t<2} \Big| \widetilde{\rA}_{\sigma_j}(\rF)(\cdot, t) \Big| \Big\|_{L^{2}(\bB^d(0,R))} \\
	&\lesssim 2^{j/2} \| \widetilde{\rA}_{\sigma_j}(\rF)(\cdot, \cdot)\|_{L^2(\bR^d\times [1,2])}.
\end{split}
\end{align}
Combining two estimates \eqref{230831_2300} and \eqref{231120_1745}, we obtain
\[
	\Big\| \sup_{1<t<2} \Big| \widetilde{\rA}_{\sigma_j}(\rF)(\cdot, t) \Big| \Big\|_{L^{2/m}(\bB^d(0,R))} 
	\lesssim 2^{-j(s-\frac{(m-1)d}{2} - \frac{1}{2})} \prod_{i=1}^m \|f_i\|_{L^2(\bR^d)}.
\]
Hence, the proof is completed.

We end this section with the proof of Corollary~\ref{Prop1}.
\begin{proof}[Proof of Corollary~\ref{Prop1}]
	From the Lemma \ref{lem_loc_max} we have 
	\[
		\Vert \mathrm{M}^{loc}_{\sigma_j}\Vert_{L^2\times\cdots\times L^2\rightarrow L^{2/m}}\lesssim 2^{-j(k/2-\frac{(m-1)d}{2}-\frac{1}{2})}.
	\]
	On the other hand, observe that 
	\[
		\mathrm{M}^{loc}_{\sigma_j}(f_1,\dots, f_m)(x)\lesssim 2^j \prod_{i=1}^m M_{HL}f_i (x).
	\]
	Thus we also have $\Vert \mathrm{M}^{loc}_{\sigma_j}\Vert_{L^{p_1}\times\cdots\times L^{p_m}\rightarrow L^{p}}\lesssim 2^j$ for $1/m<p\leq\infty$, $1<p_1,\dots, p_m\leq\infty$ with $1/p = 1/p_1 + \cdots +1/p_m$.
	Therefore, applying the real interpolation theory of multi(sub)linear operators and summing over $j$ we get $L^{p}(\mathbb{R}^d)\times\cdots\times L^{p}(\mathbb{R}^d)\rightarrow L^{p/m}(\mathbb{R}^d)$ estimate of $\mathrm{M}^{loc}_{\sigma}$ for $p>\frac{k-(m-1)d+1}{k-(m-1)d}$.
\end{proof}

\section{Proof of Theorem~\ref{thm_main}}

We first recall that the global maximal function $\mathrm{M}_\sigma$ is defined by 
\begin{align}
  \mathrm{M}_{\sigma}(\rF)(x)
  =\sup_{k\in\mathbb{Z}} \sup_{1<t<2} \Big|\int_{\Sigma} \, \prod_{i=1} f_{i}(x-2^{-k}ty_i) ~\mathrm{d}\sigma(y)\Big|.
\end{align}
For any fixed $k\in\mathbb{Z}$, we define the Littlewood-Paley decomposition on $\bR^d$ as
 \begin{align}\label{proj_identity}
P_{<k} =  I -\sum^{\infty}_{n=0}P_{k+n}, 
 \end{align}
 where $P_k$ denotes the $k$-th Littlewood-Paley projection and the identity operator $I$.
 Then we may write
 \begin{equation}\label{proj_mlinear_id}
 \begin{aligned}
  \prod_{i=1}^m f_i 
  &= \prod_{i=1}^m \Big(P_{<k}f_i + P_{k\leq}f_i \Big)\\
  &= \Bigl(\prod_{\mu=1}^m P_{<k}f_\mu \Bigr) 
  +\Bigl( \prod_{\nu=1}^m P_{k\leq}f_\nu\Bigr)\\
  &\quad+ \sum_{\alpha= 1}^{m-1} \frac{1}{\alpha! (m-\alpha)!}\sum_{\tau\in S_m} \Bigl(\prod_{\mu=1}^\alpha P_{<k}f_{\tau(\mu)} \Bigr) \Bigl( \prod_{\nu=\alpha+1}^m P_{k\leq}f_{\tau(\nu)}\Bigr).
 \end{aligned}
\end{equation}
For ${\bf{n}}=(n_1,\cdots,n_m) \in \mathbb{N}_0^m = (\mathbb{N} \cup \{0\})^m$, we define for $\alpha=1, \dots, m-1$ and $\tau\in S_m$,
\begin{align*}
	\mathfrak{A}_{k}^{\alpha,\tau}(\rF)(x)
    &:=\sup_{1<t<2}\Big|\int_{\Sigma} \Bigl(\prod_{\mu=1}^\alpha P_{<k}f_{\tau(\mu)}(x-2^{-k} t y_{\tau(\mu)}) \Bigr) \Bigl( \prod_{\nu=\alpha+1}^m f_{\tau(\nu)} (x - 2^{-k} t y_{\tau(\nu)})\Bigr)~~\mathrm{d}\sigma(y)\Big|
    \end{align*}
    and
    \begin{align*}
    	\mathfrak{A}_{k}^{m,\tau}(\rF)(x) =\mathfrak{A}_{k}^m(\rF)(x)
    &:=\sup_{1<t<2}\Big|\int_{\Sigma} \Bigl(\prod_{\mu=1}^m P_{<k}f_\mu(x-2^{-k} t y_\mu) \Bigr)~\mathrm{d}\sigma(y)\Big|.
    \end{align*}
    Let us define
    \begin{align}
	\mathfrak{M}_{\bf{n}}(\rF)
    &:=\sup_{k\in\mathbb{Z}} \sup_{1<t<2}\Big|\int_{\Sigma} \prod_{i=1}^m P_{k+n_i} f_i(x - 2^{-k} t y_i)~~\mathrm{d}
    \sigma(y) \Big|,\label{M_n_F}\\
	\mathfrak{S}_{\bf{n}}^q(\rF)
    &:=\Big\| \sup_{1<t<2} \Big| \int_{\Sigma} \prod_{i=1}^m P_{k+n_i} f_i(x - 2^{-k} t y_i)~~\mathrm{d}
    \sigma(y) \Big| \Big\|_{\ell^q({k\in\mathbb{Z}})}.\label{S_n_F}
\end{align}
Then, the global maximal function $\mathrm{M}_{\sigma}$ is bounded by a constant multiple of 
\begin{align}
  \sum_{\alpha=1}^m \sum_{\tau\in S_m}\sup_{k\in\bZ} |\mathfrak{A}_k^{\alpha,\tau}(\rF)|+ \sum_{\mathbf{n}\in \mathbb{N}_0^m} \mathfrak{M}_{\mathbf{n}}(\rF).
\end{align}
Since the choice of $\tau$ for fixed $\alpha$ is harmless in estimating $\mathfrak{A}_k^{\alpha,\tau}(\rF)$, we may consider $\mathfrak{A}_k^{\alpha}(\rF)$ instead of $\mathfrak{A}_k^{\alpha,\tau}(\rF)$, which is given by
\begin{align}
	\mathfrak{A}_{k}^{\alpha}(\rF)(x)
    &:=\sup_{1<t<2}\Big|\int_{\Sigma} \Bigl(\prod_{\mu=1}^\alpha P_{<k}f_{\mu}(x-2^{-k} t y_{\mu}) \Bigr) \Bigl( \prod_{\nu=\alpha+1}^m f_{\nu} (x - 2^{-k} t y_{\nu})\Bigr)~~\mathrm{d}\sigma(y)\Big|.\label{A_ell_F}
\end{align}

In order to prove Theorem~\ref{thm_main}, we use an induction argument. We start with the following lemma, which will play a crucial role to show the theorem when $m=2$.
\begin{lemma}\label{lem_m2}
  For $m=2$ and $\alpha=1$ we have
  \[
    \mathfrak{A}_k^\alpha(\rF)(x) \leq M_{HL}(f_1)(x) \times M_\sigma^i(f_2)(x),
  \]
  where $M_\sigma^i$ is defined by \[M_\sigma^i(f)(x) = \sup_{t>0}\int_{\Sigma} | f(x-ty_i)|~\mathrm{d}\sigma(y).\]
\end{lemma}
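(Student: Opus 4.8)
The plan is to prove the pointwise bound by directly exploiting the product structure of the multilinear average together with the fact that the low-frequency factor $P_{<k}f_1$ lives at scale $\gtrsim 2^{-k}$, which is exactly the dilation scale of $\sigma$. First I would fix $k\in\bZ$ and $1<t<2$ and write out
\[
  \int_\Sigma P_{<k}f_1(x-2^{-k}ty_1)\, f_2(x-2^{-k}ty_2)\,\mathrm{d}\sigma(y).
\]
The idea is to peel off the $P_{<k}f_1$ factor by an $L^\infty$ estimate on the support of $\sigma$: since $\sigma$ is supported in $\bB^{2d}(0,1)$, the point $x-2^{-k}ty_1$ ranges over the ball $\bB^d(x, 2\cdot 2^{-k})$ as $y$ ranges over $\supp(\sigma)$, so one has the crude bound
\[
  \Big|\int_\Sigma P_{<k}f_1(x-2^{-k}ty_1)\, f_2(x-2^{-k}ty_2)\,\mathrm{d}\sigma(y)\Big|
  \le \Big(\sup_{|z|\le 2\cdot 2^{-k}}|P_{<k}f_1(x-z)|\Big)\int_\Sigma |f_2(x-2^{-k}ty_2)|\,\mathrm{d}\sigma(y).
\]
The second factor is clearly $\le M_\sigma^i(f_2)(x)$ after taking $\sup_{t>0}$ (absorbing $2^{-k}t$ into the dilation parameter), so it remains to show that the first factor is dominated by $M_{HL}(f_1)(x)$, uniformly in $k$.

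For the low-frequency factor, I would use the standard fact that $P_{<k}f_1 = f_1 \ast \check{\Phi}(2^k\cdot)\, 2^{kd}$ for an appropriate Schwartz function, so that $P_{<k}f_1$ is "constant at scale $2^{-k}$" in the usual maximal-function sense. Concretely, for $|z|\le 2\cdot 2^{-k}$,
\[
  |P_{<k}f_1(x-z)| = \Big| 2^{kd}\int f_1(w)\,\check{\Phi}\big(2^k(x-z-w)\big)\,\mathrm{d}w\Big|
  \le 2^{kd}\int |f_1(w)|\,\big|\check{\Phi}\big(2^k(x-z-w)\big)\big|\,\mathrm{d}w,
\]
and since $|z|\le 2\cdot 2^{-k}$ the Schwartz kernel $|\check{\Phi}(2^k(x-z-w))|$ is pointwise bounded by a constant multiple of a fixed radially decreasing $L^1$ envelope evaluated at $2^k(x-w)$ (shifting the argument by a bounded amount only changes a rapidly decaying function by a constant factor). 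Hence this is $\lesssim M_{HL}(f_1)(x)$, uniformly in $k$ and in $|z|\le 2\cdot 2^{-k}$; this is the only place where the frequency localization of the factor $P_{<k}$ is used, and it is precisely matched to the dilation scale $2^{-k}$ of $\sigma$.

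Combining the two bounds and then taking $\sup_{1<t<2}$ (and noting the resulting constant is independent of $k$) gives $\mathfrak{A}_k^\alpha(\rF)(x)\le M_{HL}(f_1)(x)\cdot M_\sigma^i(f_2)(x)$, up to an absolute constant that can be normalized away. I expect the only mildly delicate point to be the uniformity in $k$ of the envelope estimate for $P_{<k}f_1$ on the $2^{-k}$-ball — i.e., verifying that translating a Schwartz bump by an amount comparable to its own scale costs only a harmless constant — but this is routine once one writes $P_{<k}$ as convolution against a dilated Schwartz kernel and uses that a rapidly decreasing function dominates its own bounded translates.
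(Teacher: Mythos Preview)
Your proposal is correct and follows essentially the same approach as the paper: both arguments pull the low-frequency factor out of the surface integral via $\sup_{y\in\Sigma,\,1<t<2}|P_{<k}f_1(x-2^{-k}ty_1)|$, then bound this supremum by $M_{HL}(f_1)(x)$ by writing $P_{<k}$ as convolution against a dilated Schwartz kernel and using that a bounded translate of a rapidly decreasing bump is dominated by a fixed integrable envelope. The paper carries out the last step via the change of variables $z\mapsto x+2^{-k}z$ to get $|\varphi(ty-z)|\lesssim(1+|z|)^{-N}$, which is exactly your ``translating by an amount comparable to the kernel's own scale costs only a constant'' observation.
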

\begin{proof}
	It suffices to show \[\sup_{1<t<2}\sup_{y\in\Sigma}|P_{<k}f(x-2^{-k}ty)| \lesssim M_{HL}(f)(x).\] 
 Indeed,
  \begin{align*}
    P_{<k}f(x-2^{-k}ty)
    &= \int_{\bR^d} f(z) \, 2^{k d}\,\varphi(2^k(x - 2^{-k}ty -z))~~\mathrm{d}z\\
    &= \int_{\bR^d} f(x+ 2^{-k}z)\, \varphi(ty-z)~~\mathrm{d}z.
  \end{align*}
  Since $y\in\Sigma$ and $1<t<2$, it follows that for any $N>0$
  \[
    |P_{<k}f(x-2^{-k}ty)| \lesssim \int_{\bR^d} |f(x + 2^{-k}z)| \frac{C_N}{(1+|z|)^N}~~\mathrm{d}z \leq M_{HL}(f)(x).
  \]
\end{proof}

Note that $M_\sigma^i$ is bounded on $L^2(\bR^d)$  whenever $|\widehat{\mathrm{d}\sigma}(\xi)| \lesssim (1+|\xi|)^{-s}$ for $s>\frac{1}{2}$ (see \cite{LeeSeo_2023, RdF_1986}). In fact, from the assumption $s>\frac{(m-1)d}{2} +\frac{1}{2}$, it follows $s>\frac12$ for $m=2$. 
Then, by Lemma~\ref{lem_m2}, when $m=2$ we prove
\[
	\Big\| \sum_{\alpha=1}^2 \sup_{k\in\bZ}|\mathfrak{A}_k^\alpha (\rF) | \Big\|_{L^1(\bR^d)} \lesssim \|f_1\|_{L^2(\bR^d)} \times \|f_2\|_{L^2(\bR^d)}.
\]

For the summation of $\mathfrak{M}_\mathbf{n}$ over $\mathbf{n} \in \mathbb{N}_0^m$, we use the following lemma.
\begin{lemma}\label{lem_decay1}
  Let $\mathbf{n} \in \mathbb{N}^m$ and $q=2/m$. Then, we have
 \[
    \|\mathfrak{S}_{\mathbf{n}}^q(\rF)\|_{L^{2/m}(\bR^d)} \lesssim 2^{-\delta(\mathbf{n},m,d)} \prod_{i=1}^m \|f_i\|_{L^2(\bR^d)},
 \]
  where $\delta(\mathbf{n},m,d) = m^{-1/2} |\mathbf{n}| s(m,d)$.
\end{lemma}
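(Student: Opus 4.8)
The plan is to use the special exponent $q=2/m$ to decouple the dyadic dilations, reduce each dilation to a rescaled single‑scale local maximal estimate of the type already established in Lemma~\ref{lem_loc_max}, and then reassemble the pieces by orthogonality of the Littlewood--Paley projections in the dilation index. Since $q=2/m$, the mixed norm $L^{2/m}(\bR^d;\ell^{2/m}(\bZ))$ is a genuine $L^{2/m}$ space over $\bR^d\times\bZ$, so that
\[
  \|\mathfrak{S}_{\mathbf n}^{q}(\rF)\|_{L^{2/m}(\bR^d)}^{2/m}
  =\sum_{k\in\bZ}\Big\|\sup_{1<t<2}\Big|\int_{\Sigma}\prod_{i=1}^m P_{k+n_i}f_i(\cdot-2^{-k}ty_i)\,\mathrm{d}\sigma(y)\Big|\Big\|_{L^{2/m}(\bR^d)}^{2/m}.
\]
For each fixed $k$ I would dilate $x\mapsto 2^{-k}x$, which is the symmetry of the averaging operator: the $i$‑th input, Fourier supported in $\{|\xi_i|\sim 2^{k+n_i}\}$, becomes Fourier supported in $\{|\xi_i|\sim 2^{n_i}\}$, and the Jacobian cancels exactly against the $L^2$‑rescaling of the inputs, so the matter reduces to the scale‑uniform estimate
\[
  \Big\|\sup_{1<t<2}\Big|\int_{\Sigma}\prod_{i=1}^m h_i(x-ty_i)\,\mathrm{d}\sigma(y)\Big|\Big\|_{L^{2/m}(\bR^d)}\lesssim 2^{-\delta(\mathbf n,m,d)}\prod_{i=1}^m\|h_i\|_{L^2(\bR^d)},
\]
valid for every $h_i$ with $\widehat{h_i}$ supported in $\{|\xi_i|\sim 2^{n_i}\}$ (the mild enlargement of this support caused by fattening the Littlewood--Paley multipliers after rescaling is harmless).

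For this scale‑uniform estimate I would re‑run the proof of Lemma~\ref{lem_loc_max} with $\rF=(h_1,\dots,h_m)$. The key point is that the joint Fourier transform of $\prod_i h_i$, as a function on $\bR^{md}$, lies in the box $\prod_i\{|\xi_i|\sim 2^{n_i}\}\subset\{|\xi|\sim 2^{n_{\max}}\}$, $n_{\max}=\max_i n_i$, so only $O(1)$ of the pieces $\sigma_j=\sigma\ast\Psi_j$ contribute and they all have $j\sim n_{\max}$. Splitting $\bR^d$ into $\bB^d(0,R)$ and its complement exactly as after Lemma~\ref{lem_local}, on the complement one is dominated by $2^{-n_{\max}N}\prod_i M_{HL}h_i$ for any $N$, while on $\bB^d(0,R)$ one applies H\"older (passing from $L^{2/m}$ to $L^2$ at bounded cost), then Lemma~\ref{lem_a_meas} with $\alpha=d$ to remove the supremum in $t$ at cost $\sim 2^{n_{\max}/2}$, and finally Plancherel: on the box $|\widehat{\mathrm{d}\sigma}(t\xi)|\lesssim 2^{-s\,n_{\max}}$, while the passage from the $\bR^{md}$ spacetime norm to the diagonal $\bR^d$ norm is carried out against the tensor product $\bigotimes_i\Psi_{n_i}$ of $d$‑dimensional Littlewood--Paley kernels (as in Lemma~\ref{lem_decay}), at combinatorial cost $2^{d\sum_{i\ne i_{\max}}n_i}\le 2^{(m-1)d\,n_{\max}}$. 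Collecting exponents gives a gain $2^{-n_{\max}(s-1/2)+\frac d2\sum_{i\ne i_{\max}}n_i}\le 2^{-n_{\max}s(m,d)}\le 2^{-|\mathbf n|s(m,d)/m}$; any positive multiple of $|\mathbf n|$ already suffices for the summation over $\mathbf n$ in Theorem~\ref{thm_main}, and the stated exponent $\delta(\mathbf n,m,d)=m^{-1/2}|\mathbf n|s(m,d)$ is obtained by tightening this book‑keeping so that the curvature gain registers all of $n_1,\dots,n_m$ rather than merely $n_{\max}$.

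It remains to reassemble. Written back in terms of $\rF$, the scale‑uniform estimate reads
\[
  \Big\|\sup_{1<t<2}\Big|\int_{\Sigma}\prod_{i=1}^m P_{k+n_i}f_i(\cdot-2^{-k}ty_i)\,\mathrm{d}\sigma(y)\Big|\Big\|_{L^{2/m}(\bR^d)}\lesssim 2^{-\delta(\mathbf n,m,d)}\prod_{i=1}^m\|P_{k+n_i}f_i\|_{L^2(\bR^d)}.
\]
Raising to the power $2/m$ and summing over $k$, then applying H\"older's inequality in $k$ with the $m$ exponents all equal to $m$, and then the Littlewood--Paley identity $\sum_{k\in\bZ}\|P_{k+n_i}f_i\|_{L^2}^2=\|f_i\|_{L^2}^2$ for each $i$, one gets
\[
  \sum_{k\in\bZ}\prod_{i=1}^m\|P_{k+n_i}f_i\|_{L^2}^{2/m}\le\prod_{i=1}^m\Big(\sum_{k\in\bZ}\|P_{k+n_i}f_i\|_{L^2}^2\Big)^{1/m}=\prod_{i=1}^m\|f_i\|_{L^2}^{2/m},
\]
and hence $\|\mathfrak{S}_{\mathbf n}^{q}(\rF)\|_{L^{2/m}(\bR^d)}\lesssim 2^{-\delta(\mathbf n,m,d)}\prod_{i=1}^m\|f_i\|_{L^2(\bR^d)}$, as claimed.

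The step I expect to be the main obstacle is the scale‑uniform single‑scale estimate, and specifically making the curvature gain $2^{-s\cdot(\cdot)}$ survive the two losses it must absorb — the $O(1)$‑many‑derivatives Sobolev embedding in $t$ used to discard the supremum, and the restriction of an $m$‑parameter quantity to the diagonal of $\bR^{md}$ — with enough decay left to reflect all the parameters $n_i$ and not only $n_{\max}$. This is precisely where the hypothesis $s>\frac{(m-1)d}{2}+\frac12$, i.e. $s(m,d)>0$, enters. The reassembly, once a scale‑uniform bound with the factor $\prod_i\|P_{k+n_i}f_i\|_{L^2}$ on the right is in hand, is routine almost‑orthogonality and should present no difficulty.
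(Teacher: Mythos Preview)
Your proposal is correct and follows essentially the same route as the paper: decouple the $k$-sum via $q=2/m$, rescale each term to a single-scale local maximal estimate (the paper simply cites Lemma~\ref{lem_loc_max} with $j=n_{\max}$, whereas you re-run its proof), and reassemble by H\"older in $k$ together with Littlewood--Paley orthogonality. One small correction: the stated exponent $\delta(\mathbf n,m,d)=m^{-1/2}|\mathbf n|\,s(m,d)$ requires no extra ``book-keeping that registers all the $n_i$''---it follows immediately from your own bound $2^{-n_{\max}s(m,d)}$ via the elementary inequality $n_{\max}\ge m^{-1/2}|\mathbf n|$ for the Euclidean norm.
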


Note that $\mathfrak{M}_\mathbf{n} \leq \mathfrak{S}^{q}_\mathbf{n}$ due to $\ell^\infty \to \ell^q$ embedding with $q=2/m$.
Hence, it follows that
\begin{align}
  \| \mathfrak{M}_\mathbf{n}(\rF) \|_{L^{2/m}(\bR^d)} \lesssim 2^{-\delta(\mathbf{n},m,d)} \prod_{i=1}\|f_i\|_{L^{2}(\bR^d)}.
\end{align}
Since $2^{-\delta(\mathbf{n},m,d)} $ is summable over $\mathbf{n}\in\mathbb{N}_0^{m}$ for $s(m,d)>0$, this proves the theorem for the case of  $m=2$.

For the induction, we assume that Theorem~\ref{thm_main} holds for $N$-linear operators with $N=2,\cdots,m-1$. 
Note that we already show it holds that when $m=2$.
Under the assumption, we will show the following lemma.
\begin{lemma}\label{lem_induction}
  For $\alpha=1, \dots, m$, we have
  $$
    \mathfrak{A}_k^\alpha(\rF)(x) 
    \lesssim 
    \prod_{\mu=1}^\alpha M_{HL}(f_\mu)(x) \times 
    \sup_{k\in\bZ} \sup_{1<t<2}\int_{\Sigma} \Big| \prod_{\nu=\alpha+1}^m f_\nu(x - 2^{-k} t y_\nu)\Big|~\mathrm{d}\sigma(y).
  $$
  Moreover, if we assume Theorem~\ref{thm_main} holds for $N$-linear operators with $N=2,\cdots,m-1$,
  then we have
  \begin{align}\label{230904_2148}
 	 \Big\|  \sup_{k\in\bZ} \sup_{1<t<2}\int_{\Sigma} \Big|\prod_{\nu=\alpha+1}^m f_\nu(x - 2^{-k} t y_\nu)\Big|~\mathrm{d}\sigma(y)\Big\|_{L^{2/(m-\alpha)}(\bR^d, \mathrm{d}x)} \lesssim \prod_{\nu=\alpha+1}^m \|f_\nu\|_{L^2(\bR^d)}.
  \end{align}
\end{lemma}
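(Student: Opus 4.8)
\textbf{Proof plan for Lemma~\ref{lem_induction}.}

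\emph{The pointwise bound.} The plan is to prove the pointwise inequality exactly as in the proof of Lemma~\ref{lem_m2}. Expanding $P_{<k}f_\mu(x - 2^{-k}ty_\mu)$ as a convolution with $2^{kd}\varphi(2^k\cdot)$ and substituting $z \mapsto x + 2^{-k}z$, one obtains
\[
	P_{<k}f_\mu(x - 2^{-k}ty_\mu) = \int_{\bR^d} f_\mu(x + 2^{-k}z)\,\varphi(ty_\mu - z)~\mathrm{d}z.
\]
Since $y_\mu$ ranges over the compact set $\Sigma$ and $1 < t < 2$, the rapid decay of $\varphi$ gives $|P_{<k}f_\mu(x - 2^{-k}ty_\mu)| \lesssim M_{HL}(f_\mu)(x)$ uniformly in $t, y, k$. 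Pulling these $\alpha$ factors out of the integral over $\Sigma$ and out of the supremum in $t$ and $k$ yields the claimed pointwise bound, with the remaining $m-\alpha$ factors left inside.

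\emph{The norm bound \eqref{230904_2148}.} Fix $\alpha \in \{1,\dots,m\}$; the case $\alpha = m$ is the trivial bound by $\prod M_{HL}(f_\mu)$, so assume $1 \le \alpha \le m-1$ and set $N = m - \alpha \in \{1,\dots,m-1\}$. The object on the left of \eqref{230904_2148} is precisely the global maximal function $\mathrm{M}_\sigma$ associated to the \emph{pushforward} measure of $\sigma$ under the projection $(y_1,\dots,y_m) \mapsto (y_{\alpha+1},\dots,y_m)$, applied to the $N$-tuple $(f_{\alpha+1},\dots,f_m)$ — except that the absolute value sits inside the integral. I would handle the absolute value first: replacing $f_\nu$ by $|f_\nu|$ does not change the $L^2$ norms, and $\int_\Sigma \prod |f_\nu(x-2^{-k}ty_\nu)|\,\mathrm{d}\sigma(y)$ is dominated by the corresponding $N$-linear maximal average of $|f_{\alpha+1}|,\dots,|f_m|$ against the projected measure $\sigma_N := \pi_* \sigma$ (after disintegrating $\sigma$ in the remaining $(m-\alpha)d$ variables, or simply by noting $\int_\Sigma G(y_{\alpha+1},\dots,y_m)\,\mathrm{d}\sigma(y) = \int G\,\mathrm{d}\sigma_N$). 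The decisive point is that $\sigma_N$ still obeys a decay hypothesis of the type required in Theorem~\ref{thm_main} for $N$-linear operators: one checks $|\widehat{\mathrm{d}\sigma_N}(\eta)| = |\widehat{\mathrm{d}\sigma}(0,\dots,0,\eta)| \lesssim (1+|\eta|)^{-s}$ with the \emph{same} $s$, and since $s > \frac{(m-1)d}{2} + \frac12 \ge \frac{(N-1)d}{2} + \frac12$, the hypothesis of Theorem~\ref{thm_main} at level $N$ is met. Applying the induction hypothesis to $\mathrm{M}_{\sigma_N}(|f_{\alpha+1}|,\dots,|f_m|)$ gives the bound by $\prod_{\nu=\alpha+1}^m \||f_\nu|\|_{L^2} = \prod_{\nu=\alpha+1}^m \|f_\nu\|_{L^2}$ in $L^{2/N}(\bR^d)$, which is \eqref{230904_2148}.

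\emph{Main obstacle.} The routine parts are the Hardy--Littlewood domination and the $\ell^\infty$ bookkeeping; the step that needs genuine care is justifying that passing from $\sigma$ to the projected/disintegrated measure $\sigma_N$ preserves a usable Fourier-decay bound and that the induction hypothesis is literally applicable to it — in particular that the support is still inside the unit ball (clear, since projection does not increase the support) and that the bound $|\widehat{\mathrm{d}\sigma_N}| \lesssim (1+|\cdot|)^{-s}$ holds with $s$ in the admissible range for $N$-linear operators. Once that is in place, the two halves of the lemma combine immediately: multiplying the pointwise bound by the $L^{2/\alpha}$ estimate $\|\prod_{\mu=1}^\alpha M_{HL}(f_\mu)\|_{L^{2/\alpha}} \lesssim \prod \|f_\mu\|_{L^2}$ (Hölder plus the $L^2$ boundedness of $M_{HL}$) and the $L^{2/(m-\alpha)}$ estimate \eqref{230904_2148}, via Hölder's inequality with exponents $2/\alpha$ and $2/(m-\alpha)$ summing to $2/m$, controls $\|\mathfrak{A}_k^\alpha(\rF)\|_{L^{2/m}}$ uniformly in $k$, hence $\|\sup_k |\mathfrak{A}_k^\alpha(\rF)|\|_{L^{2/m}} \lesssim \prod_{i=1}^m \|f_i\|_{L^2}$, closing the induction.
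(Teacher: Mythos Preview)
Your proposal is correct and follows the same route as the paper: the pointwise bound is reduced to the Hardy--Littlewood domination of Lemma~\ref{lem_m2}, and the norm bound is obtained by recognizing the remaining factor as an $(m-\alpha)$-linear maximal average against the pushforward $\sigma_N=\pi_*\sigma$, checking that $\widehat{\mathrm{d}\sigma_N}(\eta)=\widehat{\mathrm{d}\sigma}(0,\dots,0,\eta)$ inherits the same decay exponent $s>\frac{(m-\alpha-1)d}{2}+\frac12$, and invoking the induction hypothesis. One small point you should make explicit: the induction hypothesis is stated only for $N=2,\dots,m-1$, so the case $\alpha=m-1$ (i.e., $N=1$) has to be handled separately via the classical linear maximal bound on $L^2$ for measures with Fourier decay $s>1/2$ --- the paper does exactly this, citing \cite{RdF_1986, LeeSeo_2023}.
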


\begin{proof}
Note that the first assertions of the lemma follows directly by the proof of Lemma~\ref{lem_m2}.
For the second assertion, observe that the integrand of the left-hand side of \eqref{230904_2148} is an $(m-\alpha)$-sublinear operator, and the symbol has decay $s>\frac{(m-1)d}{2} + \frac{1}{2}$.
For $\alpha=m$ we have nothing to prove, for $\alpha=m-1$ we have a linear maximal average which is surely bounded on $L^2(\bR^d)$, and for $\alpha=m-2$ we have a bi(sub)linear operator which is already proved by previous steps.

Therefore it suffices to consider $1\leq \alpha\leq m-3$.
In case of $1\leq \alpha\leq m-3$, observe that the Fourier decay $s>\frac{(m-1)d}{2} + \frac{1}{2}$ is clearly larger than $\frac{(m-\alpha-1)d}{2}+\frac{1}{2}$ which is the condition of Theorem~\ref{thm_main} for $(m-\alpha)$-sublinear operators.
Since we assume that Theorem~\ref{thm_main} holds for $N=1, \dots, m-1$, it follows that
\[
	\Big\Vert \sup_{k\in\bZ} \sup_{1<t<2}\int_{\Sigma} \Big|\prod_{\nu=\alpha+1}^m f_\nu(x - 2^{-k} t y_\nu)\Big|~\mathrm{d}\sigma(y) \Big\Vert_{L^{2/(m-\alpha)}(\bR^d, \mathrm{d}x)}
	\leq C \prod_{\nu=\alpha+1}^m \Vert f_\nu \Vert_{L^2(\bR^d)}.
\]
\end{proof}

Note that we assume Theorem~\ref{thm_main} holds for $N$-linear operators with $N=2,\cdots,m-1$ and prove the $N=2$ case.
For general $m$, we make use of Lemma~\ref{lem_induction} to obtain
\begin{align}\label{230102_1603}
\begin{split}
	&\|\mathfrak{A}_k^\alpha(\rF)\|_{L^{2/m}(\bR^d)}\\
	\lesssim &\Big\|\prod_{\mu=1}^\alpha M_{HL}(f_\mu)(x) \times \sup_{0<t}\int_{\Sigma} \Big|\prod_{\nu=\alpha+1}^m f_\nu(\cdot - t y_\nu)\Big|~\mathrm{d}\sigma(y) \Big\|_{L^{2/m}(\bR^d)}\\
	\leq &\Big\|\prod_{\mu=1}^\alpha M_{HL}(f_\mu)\Big\|_{L^{2/\alpha}(\bR^d)}
	\times \Big\|  \sup_{0<t}\int_{\Sigma} \Big|\prod_{\nu=\alpha+1}^m f_\nu(\cdot - t y_\nu)\Big|~\mathrm{d}\sigma(y)\Big\|_{L^{2/(m-\alpha)}(\bR^d)}\\
	\lesssim &\prod_{\mu=1}^\alpha \| f_\mu\|_{L^2(\bR^d)}
	\times \prod_{\nu=\alpha+1}^m \| f_\nu \|_{L^2(\bR^d)}.
\end{split}
\end{align}
By \eqref{230102_1603} and Lemma~\ref{lem_decay1}, Theorem~\ref{thm_main} is true under the assumption that $N$ cases hold for $N=2,\dots,m-1$.
This closes the induction. Hence, it remains to verify Lemma~\ref{lem_decay1} to complete the proof.

\subsection{Proof of Lemma~\ref{lem_decay1}}

We make use of the following scaling:
  \begin{align*}
    \Big\| \sup_{1<t<2}\Big| \int_{\Sigma} \prod_{i=1}^m \,&P_{k + n_i}f_i(x - 2^{-k} t y_i)~~\mathrm{d}\sigma(y) \Big|\Big\|_{L^{2/m}(\bR^d)}\\
    &=2^{-k md/2} \,\| \mathrm{M}_\sigma^{loc}(P_{n_1}f_{1,k}, \dots, P_{n_m}f_{m,k}) \|_{L^{2/m}(\bR^d)},
  \end{align*}
  where $f_{i,k}(x) = f_i(x/2^{k})$.
  Then, by Lemma~\ref{lem_loc_max} for $q=2/m$
  \begin{align*}
    \| \mathfrak{S}_{\mathbf{n}}^{q}(\rF)\|_{L^{2/m}(\bR^d)}^{2/m} 
    &= \sum_{k\in\bZ} 2^{-k d}\, \| \mathrm{M}_\sigma^{loc}(P_{n_1}f_{1,k}, \dots, P_{n_m}f_{m,k}) \|_{L^{2/m}(\bR^d)}^{2/m}\\
    &\lesssim \sum_{k\in\bZ} 2^{-k d}\, 2^{-(\max_{1\leq i\leq m}|n_i|) (2s(m,d))/{m}} \prod_{i=1}^m \| P_{n_i}f_{i, k} \|_{L^{2}(\bR^d)}^{2/m}\\
    &\le \sum_{k\in\bZ} 2^{-k d} \,2^{-(2\delta(\mathbf{n},m,d))/{m}} \prod_{i=1}^m 2^{(dk)/{m}}\| P_{n_i+k}f_{i} \|_{L^{2}(\bR^d)}^{2/m}\\
    &= \sum_{k\in\bZ}  2^{-(2\delta(\mathbf{n},m,d))/{m}} \prod_{i=1}^m \| P_{n_i+k}f_{i} \|_{L^{2}(\bR^d)}^{2/m},
  \end{align*}
where $\delta(\mathbf{n},m,d) = m^{-1/2}|\mathbf{n}|s(m,d)$ since $\max_{1\leq i\leq m}|n_i| \geq m^{-1/2}|\mathbf{n}|$.
  We apply H\"older's inequality to the last line in the above inequalities to obtain
  \begin{align*}
    \| \mathfrak{S}_{\mathbf{n}}^q(\rF)\|_{L^{2/m}(\bR^d)} 
    \lesssim 2^{-\delta(\mathbf{n},m,d)} \prod_{i=1}^m \Bigl(\sum_{k\in\bZ}\| P_{n_i+k}f_{j} \|_{L^{2}(\bR^d)}^{2}\Bigr)^{{1}/{2}}.
  \end{align*}
  By the Littlewood-Paley decomposition and Plancherel theorem, one can see that 
  $$
  \Big(\sum_j \big\|P_jf\big\|_p^p\Big)^{1/p} \lesssim \|f\|_p
  $$ for $p\geq2$.
  Combining these two estimates, we have
  \[
    \| \mathfrak{S}_{\mathbf{n}}^q(\rF)\|_{L^{2/m}(\bR^d)} \lesssim 2^{-\delta(\mathbf{n},m,d)} \prod_{i=1}^m \|f_i\|_{L^2(\bR^d)}.
  \]
  This proves the lemma.

\section*{Acknowledgement}
All three authors have been partially supported by NRF grant no. 2022R1A4A1018904 funded by the Korea government(MSIT) .
They are supported individually by NRF no. RS-2023-00239774(C. Cho), no. 2021R1C1C2008252(J. B. Lee), and BK21 Postdoctoral fellowship of Seoul National University(K. Shuin).
The authors are sincerely grateful to Prof. Saurabh Shrivastava, Ankit Bhojak and Surjeet Singh Choudhary for their kind comments pointing out some errors.

\nocite{*}


\end{document}